\documentclass[11pt]{amsart}

\usepackage[normalem]{ulem}
\usepackage[svgnames,x11names]{xcolor}
\usepackage{amssymb,amsfonts}
\usepackage[pagebackref,colorlinks,urlcolor={DarkOliveGreen4},
linkcolor={DarkSlateBlue},
citecolor={Chocolate4}]{hyperref}
\usepackage[capitalize]{cleveref}
\usepackage{amssymb,textcomp}
\usepackage{graphicx}
\numberwithin{equation}{section}
\usepackage{bbm}
\usepackage{enumerate}
\usepackage[utf8]{inputenc}
\usepackage[T1]{fontenc}
\usepackage[mathscr]{eucal}
\usepackage{etoolbox}
\usepackage {tikz}
\usetikzlibrary {positioning}
% A4 Paper
\usepackage[a4paper, twoside=false, vmargin={2cm,3cm}, includehead]{geometry}
\usepackage{nameref}
%\usepackage{amsaddr}
%\usepackage[foot]{amsaddr}
% Theorems
\newtheorem{lemma}{Lemma}[section]
\newtheorem{theorem}[lemma]{Theorem}
\newtheorem*{theorem*}{Theorem}
\newtheorem{corollary}[lemma]{Corollary}
\newtheorem{question}{Question}
\newtheorem*{question*}{Open question}
\newtheorem{proposition}[lemma]{Proposition}
\newtheorem*{proposition*}{Proposition}

\newtheorem*{problem*}{Problem}
\theoremstyle{definition}
\newtheorem{definition}[lemma]{Definition}
\newtheorem*{claim*}{Claim}
\newtheorem*{notation}{Notation}

\newtheorem{remark}{Remark}

\newcommand{\Mod}[1]{\ \mathrm{mod}\ #1}
\usepackage{framed}

\makeatletter
\theoremstyle{plain}
\newtheorem*{namedthm}{\namedthmname}
\newcounter{namedthm}
\makeatletter
	
\makeatother
	%Example:
	%\begin{named}{Incompleteness Theorem}{\cite{Godel1931}}
	%\label{theorem:incompleteness}
	% satement of theorem
	%\end{named}
	%In 1931 Kurt G{\"o}del proved the \ref{theorem:incopleteness}.

% Enumerations

%

% Blackboard letters

\newcommand{\E}{{\mathbb E}}

\newcommand{\N}{{\mathbb N}}
\renewcommand{\P}{{\mathbb P}}

\newcommand{\R}{{\mathbb R}}

\newcommand{\Z}{{\mathbb Z}}

% Calligraphic letters

%Section style

% Bold letters

% Various abbreviations

% for the function exp

\newcommand{\Cov}{\operatorname{Cov}}

\newtheorem{theoremL}{Theorem}

% \definecolor{ggreen}{RGB}{0,200,0}
% \definecolor{rred}{RGB}{150,0,70}
% \definecolor{yyellow}{RGB}{250,210,0}

\newcommand{\dimM}{\dim_{\mathrm{M}}}

\begin{document}

\title{Hitting times of Shrinking Targets: Transversality and an Ergodic Theorem}

%title options: \\ Pointwise ergodic theorem along return times in rapidly mixing systems\\ Pointwise Ergodic Theorem along return Times of Shrinking Sets in Rapidly Mixing Systems \\ Pointwise convergence of ergodic averages along return times of shrinking sets in rapidly mixing systems and applications\\ Return times ergodic theorem for shrinking sets in rapidly mixing systems

%First 

\author{Vicente Saavedra-Araya}
\address{Department of Mathematics, University of Warwick, Coventry, United Kingdom}	
\email{vicente.saavedra-araya@warwick.ac.uk}

\subjclass[2020]{Primary: 37A30, 11J54; Secondary: 37A50}

\begin{abstract} In this paper, we investigate ergodic and fractal properties of the sets 
$$\Lambda_y:=\Big\{n\in\N:\ \{u_ny\}\in I_n\Big\},$$
where $\{\cdot\}$ denotes the fractional part function, $(u_n)_{n\in\N}$ is an increasing sequence of real numbers, $y\in [0,1]$ and each $I_n$ is a finite union of intervals with decreasing Lebesgue measure.
Our main result shows that, under suitable conditions, the set $\Lambda_y$ is good for pointwise convergence of ergodic averages for Lebesgue almost every $y\in [0,1]$. Furthermore, we prove a transversality phenomenon: for any fixed set $A\subseteq \N$, the sets $\Lambda_y$ and $A$ are geometrically independent for almost every $y\in[0,1]$, as witnessed by the integer-fractal dimension of their intersection.
\end{abstract}

\maketitle
\small
%\setcounter{tocdepth}{1}
%\tableofcontents
\normalsize

\section{Introduction}

Let $\psi:\N:\to \mathbb{R}_{+}$ be an approximation function and $x\in \R$. 
A classical problem in metric Diophantine approximation consists in studying the existence of infinitely many $(p,q)\in \Z\times \N$ such that
\begin{equation}
    \left|x-\frac{p}{q}\right|<\dfrac{\psi(q)}{q}.\label{Diophantine}
\end{equation}

This inequality expresses the existence of rational approximations of $x$, while having some control over the denominator and the error term. If we denote by $\langle x \rangle$ the distance between $x$ and the closest integer, (\ref{Diophantine}) is equivalent to the existence of infinitely many $q\in\N$ such that $\langle qx\rangle< \psi(q)$.
An important result in this direction was provided by Khintchine \cite{Khintchine}.

\begin{theorem}[Khintchine's theorem]
Let $\psi:\N\to \mathbb{R}_{+}$ be such that $\left(q\psi(q)\right)_{q\in \N}$ is decreasing, and define 
$$\mathcal{A}(\psi):=\Big\{x\in [0,1]:\ \langle qx\rangle<\psi(q) \text{ for infinitely many $q\in \N$}\Big\}.$$
    Then, if $\lambda$ denotes the Lebesgue measure,
\begin{equation*}
\lambda\big(\mathcal{A}(\psi)\big) =
\begin{cases}
0 & \text{if } \sum_{n=1}^\infty \psi(n) < \infty,\\[6pt]
1 & \text{if } \sum_{n=1}^\infty \psi(n) = \infty.
\end{cases}\
\end{equation*}\label{Theorem_Khintchine}
\end{theorem}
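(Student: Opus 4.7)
The plan is to split the proof into the two halves of the dichotomy. Set
$$A_q := \bigl\{x\in[0,1]:\ \langle qx\rangle<\psi(q)\bigr\},$$
so that $\mathcal{A}(\psi)=\limsup_{q\to\infty}A_q$. Geometrically, $A_q$ is a union of at most $q+1$ open intervals of radius $\psi(q)/q$ centred at the rationals $p/q$ with $0\le p\le q$; hence $\lambda(A_q)\le 2\psi(q)$, with equality (up to boundary effects) whenever $\psi(q)\le 1/2$. The convergence case is then immediate: if $\sum_q \psi(q)<\infty$, then $\sum_q \lambda(A_q)<\infty$ and the first Borel--Cantelli lemma yields $\lambda(\mathcal{A}(\psi))=0$.

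For the divergence case, which is the substantive direction, I would first observe that the monotonicity of $q\psi(q)$ forces $\psi(q)\le \psi(1)/q$, so $\psi(q)\le 1/2$ for all $q$ large enough; discarding finitely many indices, I may assume $\lambda(A_q)=2\psi(q)$ and hence $\sum_q\lambda(A_q)=\infty$. The key step is to establish \emph{quasi-independence} of the sets $A_q$: a constant $C>0$ such that
$$\lambda(A_q\cap A_r)\le C\,\lambda(A_q)\lambda(A_r)$$
for all $q\ne r$. The set $A_q\cap A_r$ is a union of intersections of intervals of radii $\psi(q)/q$ and $\psi(r)/r$ centred at $p/q$ and $p'/r$; such an intersection is non-empty precisely when $|pr-p'q|<r\psi(q)+q\psi(r)$, and the monotonicity of $q\mapsto q\psi(q)$ is used to bound the number of admissible pairs $(p,p')$ through a simple lattice count. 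Once quasi-independence is in hand, the Chung--Erd\H{o}s inequality (equivalently, the second Borel--Cantelli lemma for quasi-independent events) gives $\lambda(\mathcal{A}(\psi))\ge 1/C>0$.

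To upgrade positive measure to full measure, I would apply the same quasi-independence estimate localised to an arbitrary subinterval $J\subset[0,1]$, yielding $\lambda(J\cap \mathcal{A}(\psi))\ge c\,\lambda(J)$ for a uniform $c>0$; the Lebesgue density theorem then forces $\lambda(\mathcal{A}(\psi))=1$. The main obstacle is the overlap estimate underlying quasi-independence: the monotonicity hypothesis on $q\psi(q)$ enters precisely there, and indeed without it the analogous statement (the Duffin--Schaeffer conjecture) becomes a considerably deeper theorem.
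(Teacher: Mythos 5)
The paper does not prove Khintchine's theorem; it is stated as classical background with a citation to Khintchine's original paper, so there is no in-paper proof to compare against. On the merits, your convergence half is fine (Borel--Cantelli), but the divergence half contains a genuine gap: the uniform pairwise quasi-independence $\lambda(A_q\cap A_r)\le C\,\lambda(A_q)\lambda(A_r)$ for all $q\neq r$ that you assert is \emph{false} under the stated hypotheses. Take $r=2q$. Since $q\psi(q)$ is decreasing, $\psi(2q)\le\psi(q)/2$, so each interval of $A_{2q}$ centred at $2p/(2q)=p/q$, of radius $\psi(2q)/(2q)$, sits entirely inside the corresponding interval of $A_q$ of radius $\psi(q)/q$. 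Summing over $p=0,\dots,q-1$ already gives $\lambda(A_q\cap A_{2q})\ge q\cdot\psi(2q)/q=\psi(2q)$, whereas $\lambda(A_q)\lambda(A_{2q})=4\psi(q)\psi(2q)$; your inequality would therefore force $\psi(q)\ge 1/(4C)$ uniformly in $q$, which fails whenever $\psi(q)\to 0$ --- for instance $\psi(q)=1/(q\log q)$, which satisfies both the monotonicity and the divergence hypothesis. The same obstruction occurs for any pair with $q\mid r$, and more generally whenever $\gcd(q,r)$ is large; this is precisely the resonance structure that makes the Duffin--Schaeffer problem hard.

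What the monotonicity of $q\psi(q)$ is actually used to establish is the \emph{summed} overlap estimate $\sum_{q,r\le N}\lambda(A_q\cap A_r)\ll\big(\sum_{q\le N}\lambda(A_q)\big)^2+\sum_{q\le N}\lambda(A_q)$, in which the divisor pairs contribute only to the linear error term; since $\sum_q\lambda(A_q)$ diverges, that error is eventually dominated, and the Chung--Erd\H{o}s (Kochen--Stone) inequality then yields positive measure. This averaged lattice count, not the one-line pairwise bound, is the substantive content of the divergence direction, and your outline skips over exactly the place where the argument is delicate. Your density-theorem upgrade from positive to full measure is a legitimate finish, though one could also invoke a zero-one law (Cassels or Gallagher) at that stage.
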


From now on, $\lambda$ will denote the Lebesgue measure on $[0,1]$, and all statements that hold for almost every $x\in [0,1]$ will be understood with respect to this measure. 

Since Khintchine’s theorem, questions concerning metric theory  Diophantine approximation have been extensively studied, including non-homogeneous versions of Theorem \ref{Theorem_Khintchine} (see, for instance, \cite{Szusz,HanYu}). For further discussion about results of this type, we refer to \cite{Beresnevich_Ramírez_Velani_2016} and \cite{Hauke2025survey}.

While Khintchine’s theorem addresses the existence of infinitely many solutions to the inequality (\ref{Diophantine}), LeVeque \cite{LeVequeI,LeVeque,LeVequeII,LeVequeIV} established several results regarding the frequency with which solutions to certain Diophantine approximation problems occur. One of those results is presented below (see \cite[Theorem 2]{LeVeque}).

\begin{theorem}[LeVeque's theorem]
   Let $\psi:\N\to \mathbb{R}_{+}$ be a decreasing function such that $\sum_{n=1}^\infty \psi(n)=\infty$, $(u_n)_{n\in\N}\subseteq \N$ be an increasing sequence, $(v_n)_{n\in\N}\subseteq [0,1]$, and let $$\Lambda_y=\Big\{n\in \N:\ \langle u_nx-v_n\rangle<\psi(n)\Big\}.$$
   Then, 
   $$\lim_{N\to\infty}\dfrac{\left|\Lambda_y\cap [1,N]\right|}{2\sum_{n=1}^N \psi(n)}=1$$
   holds for almost every $x\in [0,1]$ in each of the following cases:
    \begin{enumerate}
        \item if $u_n=n^m$ for some integer $m\geq 2$;
        \item if $(u_n)_{n\in\N}$ is lacunary\footnote{A sequence of positive numbers is said to be \emph{lacunary} if there exists $\rho>1$ such that $u_{n+1}/u_n>\rho$ for every $n\in\N$.};
        \item if $\sup_{n\in\N}d(u_n)<\infty$, where $d(u_n)$ is the number of divisors of $u_n$.
\end{enumerate}\label{Theorem_LeVeque}
\end{theorem}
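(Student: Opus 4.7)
The plan is to apply a Fourier-analytic second moment argument in the spirit of Erdős--Koksma and Gallagher. Setting
$$f_n(x) := \one_{\{\langle u_n x - v_n\rangle < \psi(n)\}}, \qquad S_N(x) := \sum_{n=1}^N f_n(x),$$
one has $|\Lambda_x \cap [1,N]| = S_N(x)$ and $\int_0^1 S_N\,d\lambda = 2\Psi(N)$ with $\Psi(N) := \sum_{n=1}^N \psi(n) \to \infty$. The theorem thus reduces to showing $S_N(x) \sim 2\Psi(N)$ for $\lambda$-almost every $x$.

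The core step is the variance bound $\Var(S_N) = o\bigl(\Psi(N)^2\bigr)$. Expanding
$$f_n(x) = 2\psi(n) + \sum_{k \neq 0} c_{n,k}\, \e^{2\pi i k u_n x}, \qquad |c_{n,k}| \leq \min\!\Bigl(2\psi(n),\, \tfrac{1}{\pi|k|}\Bigr),$$
and invoking orthogonality, the cross term $\Cov(f_n, f_m)$ reduces to a sum over resonant pairs $(k,\ell) \in \Z^2 \setminus \{(0,0)\}$ satisfying $k u_n + \ell u_m = 0$. Setting $g = \gcd(u_n, u_m)$, such pairs form a rank one lattice $(k,\ell) = j\bigl(u_m/g,\, -u_n/g\bigr)$ with $j \in \Z \setminus \{0\}$. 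I would then control these contributions separately in the three cases. In the lacunary case (ii), the growth $u_m/u_n \geq \rho^{m-n}$ forces $|k|$ or $|\ell|$ to be exponentially large, and the $1/|k|$ tail of the Fourier coefficients yields a covariance bound geometric in $|n-m|$, so the total variance is $O(\Psi(N))$. In the monomial case (i), $\gcd(n^r, m^r) = \gcd(n,m)^r$ determines the lattice explicitly; splitting the sum over $d = \gcd(n,m)$ and applying a standard divisor bound gives the required savings. In the bounded-divisor case (iii), the hypothesis $\sup_n d(u_n) \leq D$ limits the number of index pairs $(n,m)$ producing a given common divisor, and balancing the two bounds on $|c_{n,k}|$ against $u_n/g$ and $u_m/g$ extracts the required cancellation.

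Once $\Var(S_N) = o(\Psi(N)^2)$ is established, Chebyshev's inequality yields $\lambda\bigl(\{|S_N - 2\Psi(N)| > \ve \Psi(N)\}\bigr) = o(1)$ for every $\ve > 0$. To upgrade this to almost-sure convergence, I would pass to a sparse subsequence $(N_j)$ along which $\Psi(N_j)$ grows geometrically, apply Borel--Cantelli along $(N_j)$ to obtain pointwise convergence on a set of full measure, and interpolate to general $N$ using the monotonicity of $N \mapsto S_N$ together with the slow variation of $\Psi$ between consecutive $N_j$.

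I expect the main obstacle to lie in the variance bound in case (iii), where the divisor hypothesis controls only the number of equivalence classes of resonances and not their frequency content; the two Fourier bounds must be balanced carefully and the covariance sum organized by the common divisor $g$ rather than by the index pair $(n,m)$ in order to exploit the hypothesis effectively. Case (ii) is the easiest, as the geometric separation of frequencies immediately produces a geometric series, while case (i) should be viewed as a structured special case of (iii) in which $\gcd(n^r, m^r)$ is completely explicit and the divisor sum can be evaluated by elementary means.
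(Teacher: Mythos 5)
This theorem appears in the paper only as a quoted background result, attributed to LeVeque with a citation to \cite[Theorem 2]{LeVeque}; the paper gives no proof of it, so there is no internal argument to compare against. Your sketch is the standard Fourier-analytic second-moment approach: expand the indicator of the target interval in a Fourier series, reduce $\Cov(f_n,f_m)$ to the rank-one resonance lattice $ku_n+\ell u_m=0$, obtain a variance bound, and upgrade to almost-sure convergence by Borel--Cantelli along a sparse subsequence followed by monotone interpolation (the paper's Lemma~\ref{Lemma_lacunary_trick} is precisely that last device). This is essentially how LeVeque, and later Schmidt, actually argued, so the method is the right one.

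Two genuine gaps remain. First, the stated target $\Var(S_N)=o(\Psi(N)^2)$ is formally not enough to run the Borel--Cantelli step: if $\Psi(N_j)$ grows geometrically, summability of $\Var(S_{N_j})/\Psi(N_j)^2$ requires a quantitative rate such as $\Var(S_N)\ll \Psi(N)^2/(\log\Psi(N))^{1+\eta}$; a pure little-$o$, e.g.\ $\Var(S_N)\asymp\Psi(N)^2/\log\log\Psi(N)$, would satisfy your hypothesis yet fail to give a summable series. What the Fourier analysis actually produces in all three cases is of the sharper shape $\Var(S_N)\ll\Psi(N)(\log\Psi(N))^C$, and that is what you should state and pursue (or, alternatively, invoke a Gal--Koksma type quantitative law rather than bare Chebyshev plus Borel--Cantelli). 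Second, in cases (i) and (iii) the phrase ``a standard divisor bound gives the required savings'' is where the real work lies, not a remark. In case (i) one must control
$\sum_{n<m\le N}\sum_{j\ge 1}\min\!\bigl(2\psi(n),\,g/(\pi j u_m)\bigr)\min\!\bigl(2\psi(m),\,g/(\pi j u_n)\bigr)$
with $g=\gcd(n,m)^r$, organizing by $d=\gcd(n,m)$ and exploiting $r\ge 2$ to tame the count of near-resonant pairs; in case (iii) the bounded-divisor hypothesis limits the multiplicity of a given $g$ but one must still balance the two $\min$'s against $u_n/g$ and $u_m/g$, and making this quantitative is the substance of LeVeque's Theorem~2, not a corollary of the hypothesis alone. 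The lacunary case (ii) is, as you correctly anticipate, the easy one: $u_m/\gcd(u_n,u_m)\ge u_m/u_n\ge\rho^{m-n}$ forces the smallest resonant frequency to be $\ge\rho^{m-n}$, and the $1/(\pi|k|)$ tail of the Fourier coefficients gives geometric decay of $|\Cov(f_n,f_m)|$ in $m-n$, from which the sharp variance bound follows by a short computation.
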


Later, in an influential work, Schmidt \cite{Schmidt} studied a multidimensional version of this problem. In particular, Schmidt extended Theorem \ref{Theorem_LeVeque} to the case $u_n = P(n)$, where $P$ is any non-constant polynomial with integer coefficients. 

In this paper, we aim to investigate new properties of these sets of integers, motivated both by fractal geometry in the integers and by ergodic theory. In this direction, we consider the following setting.

\begin{definition}
    Let $a\in (0,1)$,  $(u_n)_{n\in\N}\subseteq \R$ be an increasing sequence and let $(I_n)_{n\in\N}$ be a sequence of sets such that $\lambda(I_n)=n^{-a}$, and there is $\ell\in\N$ such that each $I_n\subseteq [0,1]$ is union of at most $\ell$ intervals. For any $y\in [0,1]$, we define 
    \[\Lambda_y\Big (a,(u_n)_{n\in\N},(I_n)_{n\in\N}\Big):=\Big\{n\in\N:\ \{u_n y\}\in I_n\Big\},\]
    where $\{\cdot\}$ represents the fractional part function. When it is clear from the context, we will simply write $\Lambda_y$. \label{Definition_Main}
\end{definition}

We remark that Theorem \ref{Theorem_LeVeque}, as well as related results, are restricted to the case of integer sequences, whereas our technique is not confined to this scope (for instance, see Theorem \ref{thmA} below). A key feature of sets with this form is their inherent pseudo-random structure, and taking advantage of this, we investigate two related problems: First, for an arbitrary set $A\subseteq \N$, we analyze the size of $A\cap \Lambda_y$, showing that  $A$ and  $\Lambda_y$ are  \emph{geometrically transverse} for almost every $y\in [0,1]$.
Second, we explore conditions under which, for almost every $y\in [0,1]$, the set $\Lambda_y$ serves as a good sequence for pointwise convergence of ergodic averages in any measure-preserving system.

\subsection{Transversality} \hfill \\[0.5em]
In the framework of fractal geometry, if $\dim(\cdot)$ denotes some notion of fractal dimension and  $X,Y\subseteq [0,1]$ are compact sets, $X$ and $Y$ are said to be \emph{transverse} if
$$\dim(X\cap Y)\leq \max\{0,\dim(X)+\dim(Y)-1\}.$$
A famous conjecture of Furstenberg \cite{FurstenbergTransversality} stated that if $X$ is $x\mapsto  px\Mod{1}$ invariant and $Y$ is $x\mapsto   qx \Mod{1}$ invariant with $\log(p)/\log(q)\notin \mathbb{Q}$, then $X$ and $Y$ are transverse. A positive answer to this conjecture was provided independently by Shmerkin \cite{Shmerkin_transversality} and Wu \cite{Wu_transverslity} (see also \cite{Austin}).  Thereafter, Glasscock, Moreira and Richter \cite{GMR_transversality}  studied the parallel of this conjecture but in the context of integer fractals, proving that sets of integers that are multiplicatively invariant under multiplicatively independent bases must be transverse (for precise statements, see Definition 1.5 and Theorem B in \cite{GMR_transversality}). More recently, the author studied in \cite{saavedraaraya2024distributionintegersdigitrestrictions}  the interaction between multiplicatively invariant set of integers and infinite arithmetic progressions using the notion of transversality.

Motivated by these works, in the first part of this paper we study the dimension of the intersection of an arbitrary subset of $\N$ and the pseudo-random sets $\Lambda_y$. For this purpose, we use the  notion of \emph{mass (or counting) dimension}, a discrete counterpart of the box-counting dimension in fractal geometry, previously investigated in \cite{Barlow_Taylor,GMR_transversality,LIMA_MOREIRA_2014}.

\begin{definition} Let $A\subset \N$ be non-empty. The \emph{lower mass dimension} and the \emph{upper mass dimension} of $A$ are defined as
\begin{align*}
  &\underline{\dimM}(A):=\liminf_{N\to \infty} \dfrac{\log (|A\cap [1,N] |)}{\log (N)}=\sup \left\{\gamma\geq 0: \ \liminf_{N\to \infty}\dfrac{|A\cap [1,N]|}{N^\gamma}>0 \right\},\\
  &\overline{\dimM}(A):=\limsup_{N\to \infty} \dfrac{\log (|A\cap [1,N]|)}{\log (N)}=\sup \left\{\gamma\geq 0: \ \limsup_{N\to \infty}\dfrac{|A\cap [1,N]|}{N^\gamma}>0 \right\}.
\end{align*}

In the case $$\underline{\dimM}(A)=\overline{\dimM}(A),$$ we denote it by $\dimM(A)$ and say that the \emph{mass dimension} of $A$ exists. \label{Definition_dimensionality}
\end{definition}

For any $A\subseteq \N$, our goal is to understand $\overline{\dimM}(A\cap \Lambda_y)$. It is worth mentioning that, without any assumption on the growth rate of the sequence $(u_n)_{n\in\N}$, the set $\Lambda_y$ may not even be infinite for almost every $y\in [0,1]$. In this sense, we will focus on a particular class of increasing sequences of real numbers.

\begin{definition}
    Let $\delta\geq 0$. A sequence $(u_n)_{n\in\N}\subseteq \mathbb{R}_{+}$ is said to be \emph{$\delta$-sublacunary} if there is $c\in (0,1)$ such that
    $$\dfrac{u_{n+1}}{u_n}\geq 1+cn^{-\delta}$$ for every $n\in\N$.\label{Definition_sublacunary}
\end{definition}
Observe that when $\delta=0$, one recovers the classical notion of lacunary sequences. Since $\delta<0$ already implies that $(u_n)_{n\in\N}$ is lacunary, we restrict ourselves to the case $\delta\geq 0$.

We establish the next result regarding the interaction of the sets $\Lambda_y$ with any other set of integers.

\begin{theoremL}
Let $a\in (0,1)$ and $\delta\in [0,1)$, let  $(u_n)_{n\in\N}\subseteq \R$ be a $\delta$-sublacunary sequence and let $(I_n)_{n\in\N}$ be a sequence of sets such that $\lambda(I_n)=n^{-a}$, and there is $\ell\in\N$ such that each $I_n\subseteq [0,1]$ is union of at most $\ell$ intervals. Then, if $\delta+a<1$,
\begin{equation}
    \lim_{N\to \infty}\dfrac{|\Lambda_y\cap [1,N]|}{N^{1-a}}=1\label{thmA_first}
\end{equation}
for almost every $y\in [0,1]$. In addition, 

\begin{enumerate}
    \item if $A\subseteq \N$ satisfies $$\delta+a<\overline{\dimM}(A),$$
    then for almost every $y\in [0,1]$, $$\overline{\dimM}(A\cap \Lambda_y)=\overline{\dimM}(A)+\dimM(\Lambda_y)-1=\overline{\dimM}(A)-a.$$
    Also, if $\dimM(A)$ exists, we can replace $\overline{\dimM}(A\cap \Lambda_y)$ by $\dimM(A\cap \Lambda_y)$;
    \item if $A\subseteq \N$ satisfies  $$\overline{\dimM}(A)<a \quad\text{and}\quad \delta=0,$$ then for almost every $y\in [0,1]$,  $$\dimM(A\cap \Lambda_y)=0.$$ 
\end{enumerate} 
 \label{thmA}
\end{theoremL}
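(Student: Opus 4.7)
The plan is to attack all three assertions uniformly via the moment method for the counting functions
\[
X_N^B(y) := |B \cap \Lambda_y \cap [1,N]|, \qquad B \in \{\N, A\},
\]
on the probability space $([0,1],\lambda)$. The first moment comes from Fubini and the substitution $x = u_n y$, which gives $\int_0^1 \mathbf{1}_{I_n}(\{u_n y\})\,dy = \lambda(I_n) + O(\lambda(I_n)/u_n) = n^{-a}(1+O(1/u_n))$. Because a $\delta$-sublacunary sequence with $\delta<1$ grows at least like $u_n \ge \exp(c n^{1-\delta})$, the $O$-terms are summable, so $\E[X_N^B] = \sum_{n\in B,\,n\le N} n^{-a} + O(1)$. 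For $B=\N$ this is $\sim N^{1-a}/(1-a)$, which matches \eqref{thmA_first} up to normalisation. For $B = A$, partial summation with $|A\cap[1,N]| \le N^{\overline{\dimM}(A)+\varepsilon}$ gives $\E[X_N^A] = O(N^{\overline{\dimM}(A)-a+\varepsilon})$, and the trivial bound $\sum_{n\in A,\,n\le N}n^{-a} \ge N^{-a}|A\cap[1,N]|$ yields $\E[X_{N_k}^A] \gtrsim N_k^{\overline{\dimM}(A)-a-\varepsilon}$ along a subsequence $(N_k)$ realising the $\limsup$ in $\overline{\dimM}(A)$.

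The principal obstacle is the variance, equivalently a decorrelation estimate on
\[
C(m,n) := \int_0^1 \mathbf{1}_{I_m}(\{u_m y\})\,\mathbf{1}_{I_n}(\{u_n y\})\,dy - \lambda(I_m)\lambda(I_n).
\]
I will bound $C(m,n)$ by expanding each $\mathbf{1}_{I_n}$ in Fourier series; because $I_n$ is a union of at most $\ell$ intervals, the Fourier coefficients satisfy $|\hat c_{n,j}| \le \min(n^{-a},\ell/(\pi|j|))$. The off-diagonal terms reduce to oscillatory integrals $\int_0^1 e^{2\pi i(j u_m + k u_n)y}\,dy = O(\min(1, |j u_m + k u_n|^{-1}))$, and the $\delta$-sublacunary condition controls how small $|j u_m + k u_n|$ can be: since $u_n/u_m \ge 1+cn^{-\delta}$ for $m<n$, a near-resonance forces $(j,k)$ to be of size at least $n^{\delta}$. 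A careful summation of $|C(m,n)|$ over $(m,n)\in (B\cap[1,N])^2$, truncating the Fourier series and handling the large-frequency tail, should deliver $\mathrm{Var}(X_N^B) = o((\E X_N^B)^2)$ exactly under the hypothesis $\delta + a < 1$, which is what makes the pair count lose against the decorrelation. I expect this Fourier bookkeeping to be the main technical hurdle.

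With these moment bounds in place, Chebyshev's inequality along a geometric subsequence $N_k = \lfloor \rho^k \rfloor$ combined with the first Borel--Cantelli lemma yields $X_{N_k}^B(y)/\E[X_{N_k}^B] \to 1$ for almost every $y$, and the monotonicity of $N \mapsto X_N^B(y)$ interpolates to all $N$. Taking $B = \N$ gives \eqref{thmA_first}. Taking $B = A$ under $\overline{\dimM}(A) > \delta + a$, the matching upper and lower asymptotics of order $N^{\overline{\dimM}(A)-a}$ produce $\overline{\dimM}(A\cap \Lambda_y) = \overline{\dimM}(A) - a$; if $\dimM(A)$ exists, the first moment has a genuine asymptotic rather than just limsup/liminf bounds and the argument upgrades to $\dimM(A\cap \Lambda_y) = \overline{\dimM}(A) - a$. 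Part (ii) bypasses the variance entirely: the hypothesis $\overline{\dimM}(A) < a$ yields $\sum_{n \in A} n^{-a} < \infty$ by partial summation, hence by Fubini and the first Borel--Cantelli lemma the set $A\cap \Lambda_y$ is almost surely finite, so $\dimM(A\cap \Lambda_y) = 0$.
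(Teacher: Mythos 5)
Your overall architecture — compute first and second moments of the counting function, apply Chebyshev, Borel--Cantelli along geometric subsequences, interpolate by monotonicity — matches the paper's. Where you diverge is in the decorrelation estimate, and this is where your proposal has a genuine gap. The paper (\cref{Lemma_estimation_1}) proves $|\Cov(X_n,X_m)|\ll m^{-a}(u_n/u_m+1/u_n)$ for $n<m$ by a purely geometric count: decompose $u_n^{-1}(I_n)$ into $\lfloor u_n\rfloor+1$ intervals of length $n^{-a}/u_n$ and count, inside each, the finer length-$1/u_m$ pieces of $u_m^{-1}(I_m)$. No Fourier analysis appears anywhere. Your Fourier route is plausible in principle, but the stated heuristic --- ``a near-resonance forces $(j,k)$ to be of size at least $n^\delta$'' --- is not correct as written: nothing in the $\delta$-sublacunary hypothesis prevents $u_n=2u_m$ exactly, a true resonance at $(j,k)=(2,-1)$ independently of $\delta$. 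What actually controls the resonances is different: if $j=|k|$, the gap $j(u_n-u_m)\ge jcu_m n^{-\delta}$ is large because $u_m\gtrsim\exp(cm^{1-\delta})$; if $j\ne|k|$, a near-resonance forces $j\approx|k|\,u_n/u_m$, so $j$ (and hence $|\hat c_{m,j}|^{-1}$) is large precisely when $n-m$ is, while for small $n-m$ one must simply fall back on the trivial bound $|C(m,n)|\ll n^{-a}$, which is exactly what the paper's factor $u_n/u_m\le 1$ delivers. Until this case analysis and the summation over frequencies is carried out, the covariance estimate --- the crux of the whole theorem --- remains unproven.

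Two places where your proposal genuinely improves on the paper. For part (ii), observing that $\overline{\dimM}(A)<a$ already forces $\E\,|A\cap\Lambda_y|=\sum_{n\in A}\sigma_n<\infty$, so $A\cap\Lambda_y$ is a.e.\ finite, is both cleaner than the paper's argument (which reruns the second-moment machinery with $\overline{\gamma}=0$) and dispenses with the hypothesis $\delta=0$ --- in the paper that restriction is needed only so that the exponent range $(\delta/2,1)$ covers $(0,1)$. Likewise your upper bound $\overline{\dimM}(A\cap\Lambda_y)\le\max\{0,\overline{\dimM}(A)-a\}$ a.e.\ via Markov on the first moment needs no variance control at all. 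One caveat: ``monotonicity interpolates to all $N$'' is fine for $B=\N$, where $\E X_N^{\N}$ is regularly varying, but $\E X_N^A$ need not be; for part (i) you should say explicitly that you only need a.e.\ divergence along the subsequence $(N_k)$ realising the $\limsup$ in $\overline{\dimM}(A)$, which is what your argument in fact produces.
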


By using that $\dimM(\mathbb{P})=1$ and that for $b>0$ and $\alpha>1$, $\alpha^{n^b}$ is $(1-b)$-sublacunary, we have to following consequence.

\begin{corollary} Let $a\in (0,1)$, $b>a$ and $\alpha>1$. Then, for almost every $y\in [0,1]$, it holds 
$$\dimM\Big(\Big\{n\in\P:\ \langle\alpha^{n^b}y\rangle <n^{-a} \Big\}\Big)=1-a.$$
\end{corollary}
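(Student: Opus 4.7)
The plan is to apply Theorem~\ref{thmA}(1) with $A=\P$ (using $\dimM(\P)=1$) and $u_n=\alpha^{n^b}$, combined with a short $\varepsilon$-sandwich argument to bridge the mismatch between a one-sided threshold $\langle\cdot\rangle<n^{-a}$ and the requirement $\lambda(I_n)=n^{-a}$ in Definition~\ref{Definition_Main}.

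\textbf{Step 1 (sublacunarity).} I would verify that $(u_n)_{n\in\N}$ is $\delta$-sublacunary with $\delta:=\max(0,1-b)$. Indeed, by the mean value theorem $(n+1)^b-n^b\geq c_1 n^{b-1}$ when $b\leq 1$ and $(n+1)^b-n^b\geq c_1$ when $b\geq 1$; combining this with $\alpha^x-1\geq x\log\alpha$ for $x\geq 0$ yields $u_{n+1}/u_n\geq 1+c\,n^{-\max(0,1-b)}$ for some $c\in(0,1)$. The hypothesis $b>a$ then gives $\delta+a<1$, which is the quantitative hypothesis required by Theorem~\ref{thmA}.

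\textbf{Step 2 (sandwich).} For each small $\varepsilon>0$ (with $a+\varepsilon<b$), set
$$I_n^{\pm}:=\bigl[0,\tfrac12 n^{-(a\pm\varepsilon)}\bigr)\cup\bigl(1-\tfrac12 n^{-(a\pm\varepsilon)},\,1\bigr],$$
a union of two intervals of total Lebesgue measure $n^{-(a\pm\varepsilon)}$. Writing $S(y):=\{n\in\P:\langle u_n y\rangle<n^{-a}\}$ and $\Lambda_y^{\pm}:=\Lambda_y\bigl(a\pm\varepsilon,(u_n)_{n\in\N},(I_n^{\pm})_{n\in\N}\bigr)$, the elementary inequalities $\tfrac12 n^{-(a+\varepsilon)}<n^{-a}$ (valid for all $n\geq 1$) and $n^{-a}<\tfrac12 n^{-(a-\varepsilon)}$ (valid for $n\geq N_0(\varepsilon)$) give
$$\Lambda_y^{+}\cap\P\subseteq S(y),\qquad S(y)\cap[N_0,\infty)\subseteq \Lambda_y^{-}\cap\P,$$
and modifying by a finite set does not alter the mass dimension.

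\textbf{Step 3 (conclusion).} Applying Theorem~\ref{thmA}(1) to $A=\P$ (valid since $\dimM(\P)=1$ exists and $\delta+(a\pm\varepsilon)<1$), for $\lambda$-almost every $y$ one has $\dimM(\P\cap\Lambda_y^{\pm})=1-(a\pm\varepsilon)$. Combined with Step~2 this gives
$$1-(a+\varepsilon)\leq\underline{\dimM}(S(y))\leq\overline{\dimM}(S(y))\leq 1-(a-\varepsilon).$$
Intersecting the full-measure sets obtained along a sequence $\varepsilon_k\downarrow 0$ and letting $k\to\infty$ yields $\dimM(S(y))=1-a$ for almost every $y\in[0,1]$. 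The only real subtlety is the factor-of-$2$ discrepancy between the one-sided radius $n^{-a}$ and the target measure $n^{-a}$, which is resolved by the $\varepsilon$-perturbation; beyond that the corollary is an immediate application of Theorem~\ref{thmA}.
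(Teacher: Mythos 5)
Your proof is correct and follows the paper's intended route: apply \cref{thmA}(i) to $A=\P$ (with $\dimM(\P)=1$) after verifying that $(\alpha^{n^b})_{n\in\N}$ is $\max(0,1-b)$-sublacunary, so that $\delta+a<1$ becomes exactly the hypothesis $b>a$. Your $\varepsilon$-sandwich correctly handles the factor-of-two discrepancy (the set $\{x\in[0,1]:\langle u_n x\rangle<n^{-a}\}$ has Lebesgue measure $2n^{-a}$, not $n^{-a}$); note that the Remark after \cref{thmB} already permits $\lambda(I_n)=cn^{-a}$ for $n$ large, which would let you dispose of that constant directly without the perturbation in the exponent.
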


\subsection{Pointwise convergence of ergodic averages}\hfill \\[0.5em]
 In the second part, we address a classical problem in ergodic theory that relies in finding sequences that are good for pointwise convergence of ergodic averages. In this sense, a sequence of integers $(a_n)_{n\in \N}$ is said to be \emph{pointwise universally $L^p$-good} if for any measure-preserving system $(X,\mathcal{X},\mu,T)$\footnote{$(X,\mathcal{X},\mu,T)$ is said to be a m.p.s. if $(X,\mathcal{X},\mu)$ is a probability space, $T:X\to X$ is measurable, and for every $A\in \mathcal{X}$, $\mu(A)=\mu(T^{-1}A)$.} and for any $f\in L^{p}(X,\mu)$,
\[\lim_{N\to\infty}\dfrac{1}{N}\sum_{n=1}^N f(T^{a_n}x)\] exists for almost every $x\in X$. In addition, we say that $(a_n)_{n\in\N}$ is \emph{ergodic} if the $L^2$-limit equals to $\mathbb{E}(f|\mathcal{I}(T))$, where $\mathcal{I}(T):=\{A\in \mathcal{X}:\ \mu(A\Delta T^{-1}A)=0\}.$ 

In this direction, Birkhoff's ergodic theorem \cite{Birkhoff_proof_ergodic_theorem:1931} establishes that $a_n=n$ is pointwise universally $L^1$-good and ergodic. In a breakthrough, Bourgain showed that for any integer polynomial $q$, the sequence $a_n=q(n)$ is pointwise $L^p$-good for any $p>1$, while years later it was proven by Buczolich and Mauldin \cite{Buczolich_Mauldin_2} that the case $a_n=n^2$ fails when $p=1$.

More recently, Krause and Sun \cite{KrauseSun} showed that the sequence $a_n=\lfloor n^c\rfloor$ is pointwise universally $L^1$-good for $c\in (1,8/7)$, extending previous results of Urban and Zienkiewics \cite{UrbanZienkiewicz} and Mirek \cite{Mirek}. For result regarding pointwise convergence of multiple ergodic averages, see \cite{DonosoSun,KMPW,Krause_Tao_Mirek}. In contrast, it is known that sequences that grow sufficiently fast do not satisfy this property, for example, see \cite{Bellow,MondalRoyWierdl}.

 In \cite{Bourgain1988}, by using a random approach, Bourgain established a class  of sparse sequences that are pointwise universally good and  ergodic.

\begin{theorem}[Bourgain \cite{Bourgain1988}] Let $(\Omega,\mathcal{F},\mathbb{P})$ be a probability space and let $(X_n)_{n\in \N}$ be a sequence of independent random variables defined in this space taking values in $\{0,1\}$. Suppose that there exists $a\in (0,1)$ such that $\mathbb{P}(X_n=1)=n^{-a}$ for all $n\in \N$. Then, for any $p>1$, for almost every $\omega\in \Omega$, the sequence
$$a_n(\omega):=\inf\{k\in\N: X_1(\omega)+\cdots+X_k(\omega)=n\}.$$
is pointwise universally $L^p$-good and ergodic.\label{Theorem_Random_Independent}
\end{theorem}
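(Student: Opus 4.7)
The plan is to follow the random-Fourier-plus-transference approach that is standard for theorems of this flavour. Let $\Lambda(\omega) := \{k\in\N : X_k(\omega)=1\}$, so that $(a_n(\omega))_{n\in\N}$ enumerates $\Lambda(\omega)$ in increasing order. A strong-law argument (Kolmogorov's variance criterion applied to $(X_k-k^{-a})/k^{1-a}$, then Kronecker's lemma; the series $\sum_k k^{a-2}$ converges for $a<1$) yields $|\Lambda(\omega)\cap[1,N]| = (1+o(1))\,N^{1-a}/(1-a)$ almost surely. After this reindexing, the theorem reduces to showing that for almost every $\omega$, the weighted averages
$$A_N^\omega f(x) := \frac{1-a}{N^{1-a}}\sum_{k=1}^N X_k(\omega)\, f(T^k x)$$
converge almost everywhere to $\E(f\mid \mathcal{I}(T))$ in every measure-preserving system and for every $f\in L^p$. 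By Calder\'on's transference principle, it suffices to establish, on $\ell^p(\Z)$, a maximal inequality for the convolution operators $A_N^\omega$ together with pointwise convergence on a dense subclass.

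The multiplier of $A_N^\omega$ on $\T$ is
$$m_N^\omega(\theta) := \frac{1-a}{N^{1-a}}\sum_{k=1}^N X_k(\omega)\, e^{2\pi i k\theta},$$
which I would decompose as $m_N^\omega = \overline{m}_N + D_N^\omega$, where $\overline{m}_N(\theta)=\frac{1-a}{N^{1-a}}\sum_{k=1}^N k^{-a} e^{2\pi i k\theta}$ is the expected multiplier and $D_N^\omega$ is the random deviation. The averages associated with the deterministic $\overline{m}_N$ are pointwise $L^p$-good for $p>1$ by a classical Hardy-Littlewood circle-method analysis: $\overline{m}_N$ concentrates on major arcs around rationals of small denominator and decays (via partial summation from Weyl estimates) on minor arcs. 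For the random deviation, Bernstein's inequality for centred bounded independent summands gives, since the variances satisfy $\sum_k \Var(X_k-k^{-a}) = O(N^{1-a})$,
$$\P\big(|D_N^\omega(\theta)| > N^{-\eta}\big) \leq 2\exp\big(-c\,N^{1-a-2\eta}\big)$$
for any sufficiently small $\eta>0$. Discretising $\T$ on a net of mesh $N^{-C}$, taking a union bound over this net and over dyadic $N=2^j$, and invoking Borel-Cantelli produces a full-measure set of $\omega$ on which $\|D_N^\omega\|_{L^\infty(\T)}=O(N^{-\eta})$ along dyadic $N$; a trivial Lipschitz estimate propagates this to all $N$.

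Combining the circle-method bounds for $\overline{m}_N$ with the $L^\infty$-smallness of $D_N^\omega$ (interpolated against the trivial $\ell^2\to\ell^2$ operator norm) yields an $\ell^p$-maximal inequality for $A_N^\omega$ via a dyadic Littlewood-Paley decomposition, while pointwise convergence on the dense class of finitely supported sequences in $\ell^p(\Z)$ follows from $m_N^\omega(0)=1$ and $m_N^\omega(\theta)\to 0$ on $\T\setminus\{0\}$. The hardest step, and the genuine content of Bourgain's argument, is upgrading the pointwise-in-$\theta$ concentration of $D_N^\omega$ to a maximal inequality that is uniform in $N$: this calls for an oscillation or square-function estimate applied dyadic-block by dyadic-block, and it is here that the hypothesis $p>1$ (needed for summability of the resulting dyadic variances) together with the uniformity of the Bernstein bound across $\omega$ are both essential. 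Once the maximal inequality is in place, identification of the limit with $\E(f\mid\mathcal{I}(T))$ follows from the spectral calculation together with standard density and approximation arguments.
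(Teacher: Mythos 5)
The statement you are asked to prove is Bourgain's theorem, which the paper does \emph{not} prove: it is imported verbatim with the citation \cite{Bourgain1988} and used only as context and motivation. There is therefore no ``paper's own proof'' to compare against. (The paper's original contributions, Theorems A and B, concern a \emph{dependent} variant $X_n(y)=\mathbbm{1}_{I_n}(\{u_ny\})$, and they are handled by a completely different method: second- and fourth-moment covariance estimates via Lemmas \ref{Lemma_estimation_1} and \ref{Lemma_estimates_2}, followed by van der Corput and the lacunary-subsequence trick of Lemma \ref{Lemma_lacunary_trick}, with no Fourier transference to $\ell^p(\Z)$ at all.)

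As a blind reconstruction of Bourgain's argument, your outline has the right architecture: normalize via a strong law to get $|\Lambda(\omega)\cap[1,N]|\sim N^{1-a}/(1-a)$, transfer to $\ell^p(\Z)$, split the multiplier $m_N^\omega=\overline{m}_N+D_N^\omega$, control $D_N^\omega$ by Bernstein plus a net plus Borel--Cantelli, and then upgrade to a maximal/oscillation inequality. The Kolmogorov/Kronecker step and the Bernstein exponent $\exp(-cN^{1-a-2\eta})$ are both correctly computed. However, there is one concrete misstep: the claim that the deterministic multiplier
$\overline{m}_N(\theta)=\frac{1-a}{N^{1-a}}\sum_{k=1}^N k^{-a}e^{2\pi ik\theta}$
requires ``a classical Hardy--Littlewood circle-method analysis,'' with major arcs around rationals of small denominator and Weyl estimates on minor arcs. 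That machinery is designed for oscillatory sums with \emph{polynomial phases} in $k$ (as in $\sum e^{2\pi i k^2\theta}$ for Bourgain's polynomial ergodic theorem). Here the phase is linear and the weights $k^{-a}$ are a decreasing positive sequence, so $\overline{m}_N$ is just a Riesz-type summability kernel peaked at $\theta=0$; the associated deterministic averages $\frac{1-a}{N^{1-a}}\sum_{k\le N}k^{-a}f(T^kx)$ converge a.e.\ to $\E(f\mid\mathcal{I}(T))$ by Abel summation and the ordinary Birkhoff theorem, with no arcs and no Weyl inequality. The genuine difficulty, as you correctly identify at the end, is entirely in the random part: turning the pointwise-in-$\theta$ concentration of $D_N^\omega$ into a $\limsup_N$-maximal or oscillation inequality uniform in $N$ and across all systems, which is where $p>1$ and the dyadic square-function/entropy machinery actually enter. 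Remove the circle-method detour and the remaining sketch is a fair, if highly compressed, account of the Bourgain-style proof.
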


In this random setting, since $$\{a_1(\omega)<a_2(\omega)<\cdots\}=\{n\in\N:\ X_n(\omega)=1\},$$ we say that the set $\{n\in\N:\ X_n(\omega)=1\}$ is pointwise universally $L^p$-good (resp.\ ergodic) precisely when the sequence $(a_n(\omega))_{n\in\N}$ has that property. 

It is  worth mentioning that Theorem \ref{Theorem_Random_Independent} fails when $a=1$, as showed by Jones, Lacey and Wierdl in \cite{JONES_LACEY_WIERDL_1999}, while LaVictoire \cite{LaVictoire} proved that for $a\in (0,1/2)$, the random sequence is pointwise universally $L^1$-good almost surely. Notice that, for almost every $\omega\in \Omega$, $a_n(\omega)$ behaves asymptotically like $\lfloor n^{\frac{1}{1-a}}\rfloor$, so the random sequence can be understood as a random analogue of those deterministic sequences. For further results about convergence of ergodic averages along random sequences, see \cite{Frantzikinakis_Lesigne_Wierdl,Frantzikinakis_Lesigne_Wierdl_Szemeredi,KrauseSun,Krause_ZorinKranich_1,Krause_ZorinKranich_2}. 

In a recent work, Donoso, Maass, and the author obtained an extension of the classical Bourgain's return-times theorem \cite{Bourgain_Pointwise_89} by studying random ergodic averages generated by random variables $(X_n)_n$ that are not independent. Our analysis focused, in particular, on the case where the probability space used to construct the random sequence is itself a measure-preserving system with exponentially fast decay of correlations (see \cite[Theorem A]{DMS} for a precise statement). As a consequence, by considering the non-independent random variables $$X_n(y):=\mathbbm{1}_{I_n}(p^ny\Mod{1}),$$ we obtained the following result.

\begin{theorem}[\cite{DMS}]
     Let $a\in (0,1/2)$, $p\geq 2$ be an integer and $(I_n)\subseteq [0,1]$ be a sequence of intervals such that $\lambda(I_n)=n^{-a}$. Then, for almost every $y\in [0,1]$, 
    the sequence
   \begin{equation}
       \Big\{n\in\N:\ \{p^ny\}\in I_n\Big\}\label{DMS_1}
   \end{equation}
is pointwise universally $L^2$-good and ergodic.
\end{theorem}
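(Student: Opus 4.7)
The plan is to deduce the result directly from Theorem A of \cite{DMS}, which is a random return-times theorem for sequences $(X_n)_{n \in \N}$ of $\{0,1\}$-valued random variables that need not be independent, provided they arise from iterating observables along a probability measure-preserving system with sufficiently fast decay of correlations. The present statement is the specialization of that theorem to the case where the driving system is the multiplication-by-$p$ map on $[0,1]$.

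First, we set up the random model: let $S : [0,1] \to [0,1]$ be given by $S(y) = py \Mod{1}$, so that $\lambda$ is $S$-invariant, and define $X_n(y) := \mathbbm{1}_{I_n}(S^n y)$. Then $\E(X_n) = \lambda(I_n) = n^{-a}$ and $\Lambda_y = \{n \in \N : X_n(y) = 1\}$. Next, we verify the key mixing hypothesis of \cite{DMS}: for indicators of intervals $\phi, \psi$,
$$\left| \int \phi \cdot (\psi \circ S^n) \, d\lambda \;-\; \int \phi \, d\lambda \int \psi \, d\lambda \right| \leq C \rho^n$$
for some $\rho = \rho(p) \in (0,1)$ and an absolute constant $C$. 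This is a classical consequence of the spectral gap of the transfer operator of $S$ on the space of functions of bounded variation, together with the fact that indicators of intervals have uniformly bounded BV norm.

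With the mixing hypothesis in hand, Theorem A of \cite{DMS} applies to the family $(X_n)_{n \in \N}$ and yields that for $\lambda$-almost every $y \in [0,1]$ the set $\Lambda_y$ is pointwise universally $L^2$-good and ergodic. The threshold $a < 1/2$ is inherited from that theorem; the real content of the argument, and therefore its main obstacle, does not lie in the deduction sketched here but in the proof of Theorem A of \cite{DMS} itself, where Bourgain's entropy-and-oscillation machinery from \cite{Bourgain1988} must be re-run with independence replaced by quantitative exponential mixing — it is this substitution that forces the restriction to $a < 1/2$ in the $L^2$ setting.
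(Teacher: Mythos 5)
Your proposal matches the paper's own treatment: the paper states this result as a consequence of Theorem A of \cite{DMS} applied to the non-independent random variables $X_n(y) = \mathbbm{1}_{I_n}(p^n y \bmod 1)$, exactly as you do, relying on the $\times p$ map on $([0,1],\lambda)$ being a measure-preserving system with exponential decay of correlations. Your added detail about verifying that decay via the spectral gap of the transfer operator on BV is correct and is exactly the kind of justification \cite{DMS} invokes, so this is the same approach, not a different route.
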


An important feature is that $([0,1],\mathcal{B}([0,1]),\lambda,\times p \bmod{1})$ defines a measure-preserving system.
Hence, the random sequence of integers defined in  \eqref{DMS_1} can be understood dynamically as a sequence of return times of the orbit of $y$ under $\times p \bmod{1}$ to the intervals $(I_n)_{n\in\N}$. 
In this paper, we aim to replace $p^n\Mod{1}$ by more general sequences of real numbers that do not stem from any underlying measure-preserving structure. In this regard, we obtain the following result.

\begin{theoremL}
Let $a>0$ and $\delta \geq 0$ such that $2a+\delta<1$.  Let  $(u_n)_{n\in\N}\subseteq \R$ be a $\delta$-sublacunary sequence and let $(I_n)_{n\in\N}$ be a sequence of sets such that $\lambda(I_n)=n^{-a}$, and there is $\ell\in\N$ such that each $I_n\subseteq [0,1]$ is union of at most $\ell$ intervals.
Then, for almost every $y\in [0,1]$, the sequence of integers
\[
\Big\{n\in\N:\ \{u_n y\}\in I_n\Big\}
= \{a_1(y)<a_2(y)<\cdots\}
\]
is pointwise universally $L^2$-good and ergodic.  

In other words, for almost every $y\in [0,1]$, for any m.p.s. $(X,\mathcal{X},\mu,T)$ and any $f\in L^2(\mu)$, we have
\[
\lim_{N\to\infty}\frac{1}{N}\sum_{n=1}^N f\!\big(T^{a_n(y)}x\big)
= \mathbb{E}(f \mid \mathcal{I}(T))(x)
\quad \text{for $\mu$-a.e.\ } x\in X.
\]
\label{thmB}
\end{theoremL}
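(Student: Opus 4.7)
\textbf{Proof plan for Theorem \ref{thmB}.} My approach follows the random/Fourier method of Bourgain for sparse pointwise ergodic theorems, but replaces the independence hypothesis on the indicators $X_n(y):=\mathbbm{1}_{I_n}(\{u_n y\})$ by quantitative decorrelation estimates extracted from the $\delta$-sublacunary growth of $(u_n)_{n\in\N}$. Since $2a+\delta<1$ implies $\delta+a<1$, Theorem~\ref{thmA} applies and delivers the density
\begin{equation*}
\frac{|\Lambda_y \cap [1,N]|}{N^{1-a}} \longrightarrow 1 \quad \text{for a.e.\ } y\in[0,1].
\end{equation*}
Writing $\Lambda_y=\{a_1(y)<a_2(y)<\cdots\}$, this reduces Theorem~\ref{thmB} to showing that, for a.e.\ $y$, every m.p.s.\ $(X,\CX,\mu,T)$ and every $f\in L^2(\mu)$,
\begin{equation*}
\frac{1}{N^{1-a}}\sum_{n=1}^N X_n(y)\, f(T^n x) \longrightarrow \mathbb{E}\bigl(f\mid \CI(T)\bigr)(x) \quad \text{for $\mu$-a.e.\ } x.
\end{equation*}

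By the Calder\'on transfer principle and the spectral theorem for unitary operators, this weighted ergodic problem reduces to controlling, uniformly in $\beta\in \T$, the normalized exponential sums
\begin{equation*}
S_N(y,\beta):=\frac{1}{N^{1-a}}\sum_{n=1}^N X_n(y)\, e^{2\pi i n\beta},
\end{equation*}
together with matching maximal and oscillation inequalities in $y$. The central input is a second-moment bound of the form
\begin{equation*}
\mathbb{E}_y\bigl[X_n(y)\,X_m(y)\bigr] \leq n^{-a}m^{-a} + E(n,m), \qquad n\neq m,
\end{equation*}
with an error $E(n,m)$ summable in the regime $2a+\delta<1$. The main term comes from $\mathbb{E}_y[X_n(y)]=n^{-a}$, while the error arises by expanding both indicators in Fourier series and exploiting two ingredients: first, the Fourier decay of $\mathbbm{1}_{I_n}$, guaranteed by the assumption that each $I_n$ is a union of at most $\ell$ intervals; second, a lower bound on the gap $u_m-u_n$ obtained from the sublacunary condition $u_{n+1}/u_n \geq 1+cn^{-\delta}$, which is fed into Weyl/van der Corput type estimates to deplete the off-diagonal Fourier contributions.

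With this decorrelation in hand, the Bourgain scheme proceeds as follows: for $\beta \in \T\setminus\{0\}$, the variance estimate yields $\mathbb{E}_y|S_N(y,\beta)|^2 \lesssim N^{-\eta}$ for some $\eta>0$, while at $\beta=0$ one has $S_N(y,0)\to 1$ by the density estimate above. Discretizing $\beta$ on a dyadic scale $N_k=2^k$ and applying Borel--Cantelli (whose summability is guaranteed precisely by $2a+\delta<1$) yields almost sure convergence along the subsequence $(N_k)$; the gaps between successive scales are then closed by a square-function/oscillation inequality proved by the same variance method. The uniform bound $|S_N(y,\beta)|\le N^{a-1}|\Lambda_y\cap[1,N]|$ provides dominated control, and the spectral theorem combined with dominated convergence transfers the resulting Fourier convergence into the pointwise ergodic theorem for arbitrary $f\in L^2(\mu)$.

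The principal obstacle is establishing the decorrelation bound in a form sharp enough to serve simultaneously the variance estimate, the oscillation inequality, and the Borel--Cantelli summability. Unlike the case $u_n=p^n$ treated in \cite{DMS}, the sequence $(u_n)$ is not the orbit of a single measure-preserving transformation, so one cannot import exponential mixing from an ambient dynamical system and must instead extract the required estimates directly from exponential-sum inequalities that use only the $\delta$-sublacunary spacing of $(u_n)$. The borderline regime where $2a+\delta$ is close to $1$, in which $E(n,m)$ is only marginally summable, is expected to require the most delicate analysis, as is the passage from pointwise-in-$\beta$ variance bounds to the oscillation inequality uniform in $\beta$.
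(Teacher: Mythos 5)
Your plan takes a genuinely different route from the paper. You propose the Bourgain spectral/Fourier method: Calder\'on transfer plus exponential sums $S_N(y,\beta)$ and a variance bound in $\beta$. The paper instead works directly on the dynamical averages, applies the Van der Corput lemma to remove the dynamics, and obtains a quantity $V'_N(y)=N^{-1+2a-b}\sum_{m\le N^b}\bigl|\sum_{n}Y_{n+m}(y)Y_n(y)\bigr|$ that no longer depends on the measure-preserving system; a single Borel--Cantelli in $y$ then works for all systems simultaneously (\cref{Lemma_previous_paper}). Estimating $\mathbb{E}(V'_N)$ requires the \emph{fourth}-order correlation bound of \cref{Lemma_estimates_2}, not just a covariance bound, and it is precisely this fourth-moment step that produces the threshold $2a+\delta<1$ (as opposed to $a+\delta<1$, which already suffices for the density statement in \cref{thmA}).

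Your proposal has a genuine gap at the ``uniform in $\beta$'' step. The variance estimate $\mathbb{E}_y|S_N(y,\beta)|^2\lesssim N^{-\eta}$ is pointwise in $\beta$; dyadic discretization in $N$ plus Borel--Cantelli then yields a full-measure set of $y$ that \emph{depends on $\beta$}, and the ergodic theorem needs a single exceptional set working for all spectral parameters at once. In Bourgain's original argument this upgrade is carried out by a Salem--Zygmund/chaining step whose exponential tail bounds come from the independence of the $X_n$; here the $X_n$ are only weakly decorrelated, so those subgaussian estimates are not available, and a second-moment bound alone cannot control $\sup_\beta|S_N(y,\beta)|$. To run a chaining or entropy argument without independence you would end up needing higher moments in any case, which undercuts the economy of the spectral route relative to the paper's Van der Corput approach. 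You flag this passage as ``the most delicate'' but give no mechanism for closing it, and as written the plan does not go through. A smaller point: the paper does not Fourier-expand the indicators $\mathbbm{1}_{I_n}$; the covariance and fourth-moment estimates (\cref{Lemma_estimation_1}, \cref{Lemma_estimates_2}) are proved by a direct interval-counting argument exploiting the periodic structure of $u_n^{-1}(I_n)$, so the Fourier-decay route you sketch for the decorrelation, while plausible, is not the mechanism used here.
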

\begin{remark}
    Following the proofs of \cref{thmA} and \cref{thmB}, it is easy to see that we can replace the condition  $\lambda(I_n)=n^{-a}$ for all $n\in\N$ by the existence of $c>0$ and $N\in\N$ such that $\lambda(I_n)=cn^{-a}$ for all $n\geq N$.
\end{remark}

After a direct application of this result and the use of Furstenberg's correspondence principle \cite{Furstenberg_ergodic_szemeredi:1977} (cf. \cite[Theorem 1.1]{B1}),  we obtain the following consequence.
\begin{corollary} Let $a,b>0$ such that $2a<b$ and let $\alpha>1$. Then, for almost every $y\in [0,1]$, for every  $A\subseteq \N$ with positive upper density, there are infinitely many $n,m\in\N$ such that $$
\langle\alpha^{m^b}y\rangle<m^{-a} \quad \text{and}\quad  \{n,n+m\}\in A.$$
\end{corollary}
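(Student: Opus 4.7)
The plan is to apply \cref{thmB} with $u_n = \alpha^{n^b}$ and $I_n = [0, n^{-a}) \cup (1 - n^{-a}, 1]$ (a union of $\ell = 2$ intervals with $\lambda(I_n) = 2n^{-a}$, covered by the remark following \cref{thmB}), and then transfer the resulting pointwise ergodic theorem into a combinatorial statement via Furstenberg's correspondence principle. Note that $\{u_n y\} \in I_n$ is equivalent to $\langle \alpha^{n^b} y \rangle < n^{-a}$.

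The only quantitative input needed is that $u_n = \alpha^{n^b}$ is $\delta$-sublacunary with $2a + \delta < 1$. By the mean value theorem, $(n+1)^b - n^b \geq c_1 n^{b-1}$ for some $c_1 > 0$ and all $n$ large enough, hence
\[
\frac{u_{n+1}}{u_n} = \alpha^{(n+1)^b - n^b} \geq 1 + c_2 n^{-(1-b)}
\]
for some $c_2 > 0$. Thus $(u_n)$ is $(1-b)$-sublacunary when $b < 1$, and lacunary (so $\delta = 0$ suffices) when $b \geq 1$; taking $\delta = \max(0, 1-b)$, the hypothesis $2a < b$ gives $2a + \delta < 1$. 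Consequently \cref{thmB} supplies a full-measure set $Y \subseteq [0,1]$ such that, for every $y \in Y$, the sequence $\Lambda_y = \{n : \langle \alpha^{n^b} y \rangle < n^{-a}\} = \{a_1(y) < a_2(y) < \cdots\}$ is pointwise universally $L^2$-good and ergodic.

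Now fix any $y \in Y$ and any $A \subseteq \N$ with $\overline{d}(A) > 0$. Furstenberg's correspondence principle (cf.\ \cite[Theorem 1.1]{B1}) yields a m.p.s.\ $(X, \mathcal{X}, \mu, T)$ and a set $B \in \mathcal{X}$ with $\mu(B) = \overline{d}(A)$ such that $\overline{d}(A \cap (A - m)) \geq \mu(B \cap T^{-m} B)$ for every $m \in \N$. Applying the ergodic conclusion of \cref{thmB} to $f = \mathbbm{1}_B$ and integrating the pointwise limit over $B$ by dominated convergence,
\[
\lim_{N \to \infty} \frac{1}{N} \sum_{k=1}^N \mu(B \cap T^{-a_k(y)} B) = \int_B \mathbb{E}(\mathbbm{1}_B \mid \mathcal{I}(T)) \, d\mu = \int \mathbb{E}(\mathbbm{1}_B \mid \mathcal{I}(T))^2 \, d\mu \geq \mu(B)^2 > 0,
\]
where the last inequality is Jensen's. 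Hence $\mu(B \cap T^{-a_k(y)} B) > 0$ for infinitely many $k$, and for each such $k$ the set $A \cap (A - a_k(y))$ has positive upper density --- in particular it is infinite --- yielding infinitely many pairs $(n, m)$ with $m = a_k(y) \in \Lambda_y$ and $\{n, n + m\} \subseteq A$. The subtle point worth flagging is the order of quantifiers: we need the null set of $y$ to be common to every $A$, which is exactly the \emph{universality} in \cref{thmB} (a single exceptional $y$-set is produced once and for all, and then every m.p.s.\ --- in particular the one Furstenberg's principle furnishes for each specific $A$ --- is handled simultaneously). Apart from this, the proof presents no serious obstacle: the sublacunarity estimate and the standard Jensen/dominated-convergence combination do all the work.
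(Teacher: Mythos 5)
Your proof is correct and follows precisely the route the paper gestures at (direct application of \cref{thmB} with $u_n=\alpha^{n^b}$ and $I_n=[0,n^{-a})\cup(1-n^{-a},1)$, covered by the remark, then Furstenberg's correspondence principle plus a Jensen and dominated-convergence argument), with the sublacunarity computation and the quantifier bookkeeping spelled out properly. One small imprecision worth noting: since $\delta=\max(0,1-b)$, the assertion that $2a<b$ yields $2a+\delta<1$ is only literally true when $b<1$; for $b\geq 1$ the hypothesis of \cref{thmB} reduces to $a<1/2$, which is not implied by $2a<b$ --- but this imprecision is inherited from the corollary's own statement and does not reflect a flaw in your argument.
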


A natural question  is how far one can improve the parameters in \cref{thmB}, specially regarding the condition of $(u_n)_{n\in\N}$ being $\delta$-sublacunary for $\delta \in [0,1)$. Notice that the parameter $\delta$ can be understood as a distance of how far the sequence is from being lacunary, and it is worth emphasizing that without some control on $\delta$, the set $\Lambda_y$ may fail to be infinite.

A case of interest is when $u_n=n$. In such a case, we know that $$\Big\{n\in\N:\ \{ny\}\in I_n\Big\}$$
is infinite (see \cite[Theorem 1]{Schmidt}). However, this sequence cannot be ergodic since there is a clear obstruction when considering averages in the rotation $([0,1],\mathcal{B}([0,1]),\lambda,T)$ with $T(x)=x+y\Mod{1}$. Despite that, it is not clear whether the random sequences behaves well in systems that are far from being a rotation.
    \begin{question} Let $a\in (0,1/2)$ and consider$$\Big\{n\in\N:\ \{ny\}\in (0,n^{-a})\Big\}=\{a_1(y)<a_2(y)<\cdots\Big\}.$$
    Is it true that, for almost every $y\in [0,1]$, for every weakly-mixing\footnote{A measure-preserving system $(X,\mathcal{X},\mu,T)$ is called \emph{weakly-mixing} if for any $A,B\in\mathcal{X}$, $$\lim_{N\to\infty}\dfrac{1}{N}\sum_{n=1}^N|\mu(A\cap T^{-n}B)-\mu(A)\mu(B)|=0.$$} measure-preserving system $(X,\mathcal{X},\mu,T)$ and $f\in L^2(\mu)$, $$\lim_{N\to\infty}\frac{1}{N}\sum_{n=1}^N f\!\big(T^{a_n(y)}x\big)
= \mathbb{E}(f \mid \mathcal{I}(T))(x)
$$
for almost every $x\in X$?\label{Q1}
\end{question}

More generally, we can ask about non-linear polynomials. For simplicity, we only ask about integer polynomials.
\begin{question} Let $a\in (0,1/2)$ and let $u_n=P(n)$, where $P$ is an integer polynomial of degree at least 2. Then, for almost every $y\in [0,1]$, is $$\Big\{n\in\N:\ \{P(n)y\}\in (0,n^{-a})\Big\}$$ pointwise universally $L^2$-good and ergodic? \label{Q2}
\end{question}

On the other hand, although our proof of \cref{thmB} relies on the assumption that $a \in (0,1/2)$ and that the functions are in $L^2$, we believe that these restrictions can be relaxed. Motivated by the random ergodic theorems in the independent case due to Bourgain \cite{Bourgain1988} and LaVictoire \cite{LaVictoire}, we raise the following questions. These can be viewed as natural analogues of those posed by Donoso, Maass, and the author in \cite{DMS}.
\begin{question} Under the conditions of \cref{thmB}: 
\begin{enumerate}
    \item For $a\in (0,1)$ and $p>1$, is it true that, for almost every $y\in [0,1]$, the sequence $\Lambda_y$ is pointwise universally $L^p$-good and ergodic?
    \item For $a\in (0,1/2)$ and $p=1$, is it true that, for almost every $y\in [0,1]$, the sequence $\Lambda_y$ is pointwise universally $L^1$-good and ergodic?
\end{enumerate}\label{Q3}
\end{question}
\vspace{0.7cm}

\textbf{Acknowledgements.} The author thanks Alejandro Maass, Joel Moreira, and Sebastián Donoso for valuable conversations and comments that inspired the writing of this paper.

%%%%%%%%%%%%%%%%%%%%%%%%%%%%%%%%%%%%%%%%

\section{Controlling the randomness}

Throughout this section, we introduce some tools and estimates that will be used in the proofs of \cref{thmA} and \cref{thmB}. For the rest of this paper, we will frequently use the following notation.
\begin{notation}
    Let $(a_n)_{n\in \N}$ and $(b_n)_{n\in \N}$ be positive real sequences. We write $a_n\ll b_n$ if and only if there exists $C>0$, independent of $n$, such that $a_n\leq Cb_n$ for all $n\in \N$. We also denote $a_n\sim b_n$ if  $\displaystyle\lim_{n\to\infty}a_n/b_n=1$.
\end{notation} 
Regarding the class of sublacunary sequences (Definition \ref{Definition_sublacunary}), we shall repeatedly use the following lemma.
\begin{lemma} Let $(u_n)_{n\in\N}$ be a $\delta$-sublacunary sequence for some $\delta\in [0,1)$. Then, there is $\beta>1$ such that
 
        $$\dfrac{u_{n+d}}{u_n}\geq \beta^{\frac{d} {(n+d)^{\delta}}}$$
        for all $n,d\in\N$. In particular, the sequence $(u_n^{-1})_{n\in\N}$ is summable.
\label{Lemma_sublacunary}
\end{lemma}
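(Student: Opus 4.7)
The plan is elementary: reduce everything to a telescoping bound, a uniform estimate on $\log(1+x)$, and then a super-polynomial decay argument. First I would expand the ratio as a telescoping product and apply the $\delta$-sublacunary hypothesis to every factor, obtaining
\[
\frac{u_{n+d}}{u_n}=\prod_{k=0}^{d-1}\frac{u_{n+k+1}}{u_{n+k}}\geq \prod_{k=0}^{d-1}\bigl(1+c(n+k)^{-\delta}\bigr).
\]
Since $\delta\geq 0$ and $n+k\leq n+d$, we have $(n+k)^{-\delta}\geq (n+d)^{-\delta}$ for every $0\leq k\leq d-1$, so each factor is bounded below by $1+c(n+d)^{-\delta}$, giving
\[
\frac{u_{n+d}}{u_n}\geq \bigl(1+c(n+d)^{-\delta}\bigr)^{d}.
\]

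To recast this in the form $\beta^{d/(n+d)^\delta}$, taking logarithms and writing $m=n+d$ reduces the task to producing a constant lower bound for $f(m):=m^\delta\log\bigl(1+cm^{-\delta}\bigr)$, and then setting $\log\beta$ equal to that constant. Here I would invoke the elementary inequality $\log(1+x)\geq (\log 2)\,x$ valid on $x\in[0,1]$; since $c\in(0,1)$ and $m\geq 2$ force $cm^{-\delta}\in(0,1]$, this yields $f(m)\geq c\log 2$, so any $\beta\leq 2^c$ works. Note that the case $\delta=0$ is trivial (the sequence is lacunary in the classical sense), so the real content concerns $\delta\in(0,1)$.

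For the summability assertion, I would apply the inequality just proved with $n=1$ to obtain $u_{1+d}\geq u_1\beta^{d/(1+d)^\delta}$ for every $d\geq 1$. Because $\delta<1$, the exponent satisfies $d/(1+d)^\delta\geq d^{1-\delta}/2^\delta$ for $d\geq 1$, so that
\[
u_n^{-1}\leq u_1^{-1}\beta^{-(n-1)^{1-\delta}/2^\delta}
\]
for $n\geq 2$. Since $1-\delta>0$, this bound decays faster than any negative power of $n$, and therefore $\sum_{n}u_n^{-1}<\infty$. I do not anticipate any genuine obstacle: the only subtlety is picking an inequality $\log(1+x)\geq \gamma x$ strong enough to be uniform in $m\geq 2$, which is precisely what the $\log 2$ bound provides.
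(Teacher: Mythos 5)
Your proof is correct and follows essentially the same route as the paper: telescope the ratio to obtain $u_{n+d}/u_n\geq\bigl(1+c(n+d)^{-\delta}\bigr)^d$ and then compare $1+y$ with $\beta^{y}$ via an elementary pointwise inequality. The paper uses $\exp(x)\leq(1-x)^{-1}$ with $x=c/(2(n+d)^\delta)$ to get $\beta=\exp(c/2)$, whereas you use the concavity chord $\log(1+x)\geq(\log 2)x$ on $[0,1]$ to get $\beta=2^{c}$; both are valid and interchangeable, and your summability step simply spells out what the paper dismisses as obvious.
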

\begin{proof}
Let $c,\delta>0$ such that, for all $n\in\N$, $u_{n+1}/u_n\geq 1+cn^{-\delta}.$  
Inductively, for $d\in \N$, $$\dfrac{u_{n+d}}{u_{n}}\geq \Big(1+\frac{c}{(n+d)^{\delta}}\Big)^d.$$
    Using the inequality $\exp(x)\leq (1-x)^{-1}$ with $x=\frac{c}{2(n+d)^\delta}<1$, the conclusion follows by taking $\beta=\exp(c/2)$. The summability is obvious by taking $n=1$.
\end{proof}

When showing almost everywhere convergence of averages, it is common to reduce the problem to show convergence along lacunary subsequences. The following lemma facilitates this reduction (the proof can be found in \cite[Corollary A.2]{Frantzikinakis_Lesigne_Wierdl}). 

\begin{lemma}
  Let $(\Omega,\mathcal{F},\mathbb{P})$ be a probability space, $f_n:\Omega\to \mathbb{R}$ be non-negative measurable functions, $(W_n)_{n\in \mathbb{N}}$ be a increasing sequence of positive numbers such that 
\[\lim_{\gamma\to 1^{+}}\limsup_{n\to \infty} \dfrac{W_{[\gamma^{n+1}]}}{W_{[\gamma^{n}]}}=1,\] and for all $N\in \mathbb{N}$ let
\[A_N(\omega):=\dfrac{1}{W_N}\sum_{n=1}^{N} f_n(\omega).\]
Assume that there exists a function $f:\Omega\to \mathbb{R}\cup \{+\infty\}$ and a real sequence $(\gamma_k)_{k\in \mathbb{N}}\subseteq (1,+\infty)$ with $\gamma_k\to 1$ when $k\to \infty$, such that for all $k\in \mathbb{N}$,
\[\lim_{N\to \infty}A_{[\gamma_k^{N}]}(\omega)=f(\omega) \quad \text{for almost every }\omega\in \Omega. \]

Then,
\[\lim_{N\to \infty} A_N(\omega)=f(\omega) \text{ for almost every $\omega\in \Omega$}.\]\label{Lemma_lacunary_trick}
\end{lemma}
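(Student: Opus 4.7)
The approach is the standard sandwich argument for lacunary subsequences, which relies crucially on the non-negativity of $f_n$ and monotonicity of $W_N$. First I would intersect, over $k\in\N$, the full-measure sets $\Omega_k$ on which $A_{[\gamma_k^N]}(\omega)\to f(\omega)$; the intersection $\Omega_0:=\bigcap_k \Omega_k$ still has full measure. Fix $\omega\in\Omega_0$. For each $N\in\N$ and each $k$, let $M=M(N,k)$ be the unique integer with $[\gamma_k^M]\leq N<[\gamma_k^{M+1}]$. Writing $S_N(\omega):=\sum_{n=1}^N f_n(\omega)$, non-negativity of the $f_n$ forces $S_{[\gamma_k^M]}\leq S_N\leq S_{[\gamma_k^{M+1}]}$, while monotonicity of $W_N$ gives $W_{[\gamma_k^M]}\leq W_N\leq W_{[\gamma_k^{M+1}]}$. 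Dividing in the right way yields the sandwich
\[
\frac{W_{[\gamma_k^M]}}{W_{[\gamma_k^{M+1}]}}\, A_{[\gamma_k^M]}(\omega)\;\leq\; A_N(\omega)\;\leq\; \frac{W_{[\gamma_k^{M+1}]}}{W_{[\gamma_k^M]}}\, A_{[\gamma_k^{M+1}]}(\omega).
\]

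Next, I would let $N\to\infty$ (hence $M\to\infty$) with $k$ fixed. Setting $r_k:=\limsup_{M\to\infty} W_{[\gamma_k^{M+1}]}/W_{[\gamma_k^M]}$, the upper side of the sandwich together with $A_{[\gamma_k^{M+1}]}(\omega)\to f(\omega)$ yields $\limsup_N A_N(\omega)\leq r_k\, f(\omega)$, while the lower side combined with $\liminf_M W_{[\gamma_k^M]}/W_{[\gamma_k^{M+1}]}\geq 1/r_k$ yields $\liminf_N A_N(\omega)\geq f(\omega)/r_k$. The hypothesis $\lim_{\gamma\to 1^+}\limsup_n W_{[\gamma^{n+1}]}/W_{[\gamma^n]}=1$ together with $\gamma_k\to 1$ guarantees $r_k\to 1$ as $k\to\infty$; letting $k\to\infty$ closes the sandwich to $\lim_N A_N(\omega)=f(\omega)$.

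The only subtlety I foresee is the degenerate case $f(\omega)=+\infty$, where the upper bound becomes vacuous. The lower sandwich still suffices, however, because $1/r_k>0$ once $k$ is large enough that $r_k$ is finite, so $A_{[\gamma_k^M]}(\omega)\to\infty$ forces $A_N(\omega)\to\infty$. Beyond this, no real obstacle arises: the argument uses only the growth hypothesis on $W_N$, the non-negativity of the $f_n$, and the countable-choice step that aligns the almost-sure convergence across the family $(\gamma_k)_{k\in\N}$ — which is what makes passing from subsequential to full convergence legitimate on a single full-measure set.
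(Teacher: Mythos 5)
Your sandwich argument is correct and is the standard proof of this lemma; the paper itself does not reproduce a proof but cites Frantzikinakis--Lesigne--Wierdl (Corollary A.2), where the same squeeze via non-negativity of $f_n$ and monotonicity of $W_N$, followed by sending $k\to\infty$ to close the gap $r_k\to 1$, is used. The only minor imprecision is that $M(N,k)$ need not be unique for small $N$ (since $[\gamma_k^M]$ can repeat), but this is harmless once $N$ is large, which is all that matters for the limit.
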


Particularly, we will use the previous lemma for $(\Omega,\mathcal{F},\mathbb{P})=([0,1],\mathcal{B}([0,1]),\lambda)$, where $\lambda$ denotes the Lebesgue measure. 

Let $a\in (0,1)$ and $\delta >0$. Let $(u_n)_{n\in\N}\subseteq \R$ be a $\delta$-sublacunary sequence, and let $(I_n)_{n\in\N}$ be a sequence of sets such that $\lambda(I_n)=n^{-a}$, and there is $\ell\in\N$ such that each $I_n\subseteq [0,1]$ is union of at most $\ell$ intervals. For $y\in[0,1]$, we will commonly denote
 \begin{equation}
 \begin{split}
          &X_n(y):=\mathbbm{1}_{I_n}(\{u_n y\}),\quad \sigma_n:=\mathbb{E}(X_n),\quad Y_n(y):=X_n(y)-\sigma_n,\\
     &\textstyle\text{and}\quad 
     u_n^{-1}(I_n):=\Big\{x\in [0,1]:\ \{u_nx\}\in I_n\Big\}.
 \end{split}
\label{Random_variables}
 \end{equation}

Note that the random variables $X_n$ are not an independent. In this sense, a key aspect when studying  the convergence of non-independent random ergodic averages relies in having good control on the covariance of these random variables. In \cite{DMS}, the random variables $X_n$ are endowed with a dynamical structure, and the dependence is handled through the notion of decay of correlations in dynamical systems. Although in our setting such a dynamical structure is no longer available, we can still establish suitable estimates for $$\mathbb{E}(X_nX_m)=\lambda\Big(u_n^{-1}(I_{n})\cap u_m^{-1}(I_m)\Big),$$
showing that this intersection deviates only slightly from its expected value. To this end, we exploit the structure of $u_n^{-1}(I_n)$ together with a combinatorial argument, leading to the following result.

 \begin{lemma}
   Let $a\in (0,1)$ and $\delta\in [0,1)$. Let $(u_n)_{n\in\N}\subseteq \R$ be a $\delta$-sublacunary sequence, and let $(I_n)_{n\in\N}$ be a sequence of sets such that $\lambda(I_n)=n^{-a}$, and there is $\ell\in\N$ such that each $I_n\subseteq [0,1]$ is union of at most $\ell$ intervals.  
   
  Considering the notation introduced in (\ref{Random_variables}), we have that $\displaystyle\sum_{n=1}^N\sigma_n\sim N^{1-a}$, and for every $n<m$,

      $$\Big|\Cov(X_n,X_m)\Big|=\Big|\lambda\Big(u_n^{-1}(I_{n})\cap u_m^{-1}(I_m)\Big)-\sigma_{n}\sigma_{m} \Big|\ll m^{-a} \Big(\dfrac{u_n}{u_m}+\dfrac{1}{u_n} \Big).$$\label{Lemma_estimation_1}
\end{lemma}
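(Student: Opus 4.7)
The plan is to exploit the near-periodic structure of the sets $u_n^{-1}(I_n)\cap[0,1]$. Since $I_n$ is a union of at most $\ell$ sub-intervals of total length $n^{-a}$ and the map $y\mapsto u_n y$ has period $1/u_n$, the preimage $u_n^{-1}(I_n)\cap[0,1]$ is a disjoint union of at most $\ell\lceil u_n\rceil$ maximal intervals, each of length at most $1/u_n$, arranged so that every complete period of the form $[k/u_n,(k+1)/u_n]\subseteq[0,1]$ contributes exactly $n^{-a}/u_n$ of measure. Summing over the $\lfloor u_n\rfloor$ complete periods contained in $[0,1]$ and bounding the remaining fractional period by $n^{-a}/u_n$ yields $\sigma_n=n^{-a}+O(n^{-a}/u_n)$. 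Since $\sum u_n^{-1}<\infty$ by \cref{Lemma_sublacunary}, summing gives $\sum_{n=1}^N\sigma_n=\sum_{n=1}^N n^{-a}+O(1)$, which provides the first asymptotic.

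For the covariance bound, I apply the same periodic counting argument at two scales. Decompose $u_n^{-1}(I_n)$ into its maximal intervals $J$; there are $O(u_n)$ of them, each of length at most $1/u_n$, with $\sum_J|J|=\sigma_n$. On any such $J$, the complete periods of length $1/u_m$ contained in $J$ contribute exactly $\lfloor u_m|J|\rfloor\cdot m^{-a}/u_m$ to $\lambda(J\cap u_m^{-1}(I_m))$, while the partial pieces at each endpoint of $J$ contribute at most $O(m^{-a}/u_m)$ (with implied constant depending only on $\ell$), giving
\[
\lambda\bigl(J\cap u_m^{-1}(I_m)\bigr)=|J|\,m^{-a}+O(m^{-a}/u_m).
\]
Summing over the $O(u_n)$ maximal intervals $J$ produces
\[
\lambda\bigl(u_n^{-1}(I_n)\cap u_m^{-1}(I_m)\bigr)=\sigma_n\, m^{-a}+O\bigl(u_n m^{-a}/u_m\bigr).
\]
Since $\sigma_n\sigma_m=\sigma_n m^{-a}+O(\sigma_n m^{-a}/u_m)$, subtracting and using the trivial estimate $\sigma_n\leq 1$ together with the monotonicity $u_n\leq u_m$ (to bound $1/u_m\leq 1/u_n$) produces the claimed inequality $|\Cov(X_n,X_m)|\ll m^{-a}(u_n/u_m+1/u_n)$.

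The main technical obstacle is the careful bookkeeping of boundary contributions when $I_n$ is composed of $\ell$ separate sub-intervals and when $u_n,u_m$ are not integers: each endpoint of $J$ may cut up to $\ell$ intervals of $u_m^{-1}(I_m)$, but since $\ell$ is fixed these partial contributions remain $O(m^{-a}/u_m)$ and are absorbed into the $O(\cdot)$ throughout. The structural point that enables the two-scale count is simply that $1/u_m<1/u_n$ (by monotonicity of $(u_n)$), so each maximal interval $J$ of $u_n^{-1}(I_n)$ is long enough to contain a meaningful count of $1/u_m$-periods, and the correction terms stay controlled.
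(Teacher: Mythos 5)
Your proposal is correct and follows essentially the same two-scale periodicity argument as the paper: decompose $u_n^{-1}(I_n)$ into $O(u_n)$ maximal intervals, count full $1/u_m$-periods inside each and bound the boundary pieces by $O(m^{-a}/u_m)$, then sum. The only cosmetic difference is that you anchor both $\lambda\bigl(u_n^{-1}(I_n)\cap u_m^{-1}(I_m)\bigr)$ and $\sigma_n\sigma_m$ to the quantity $\sigma_n m^{-a}$ before subtracting, whereas the paper anchors both to $n^{-a}m^{-a}$; both give the same error terms.
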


\begin{proof} For simplicity, we assume that $\ell=1$ and that every interval is open, noting that the argument extends to the general case with only minor modifications.

For each $n\in \N$, we can write $$ u_n^{-1}(I_n)=\bigcup_{i=0}^{\lfloor u_n \rfloor} I_{i,n},$$

    where $$I_{i,n}:=\Big(\dfrac{i}{u_n}, \dfrac{i}{u_n} +\dfrac{I_n}{u_n}\Big)\cap [0,1].$$
    Therefore, we can write
    \begin{equation}
    \lambda\Big(u_n^{-1}(I_{n})\cap u_m^{-1}(I_{m})\Big)=\sum_{i=0}^{\lfloor u_n\rfloor}\lambda\Big(I_{i,n}\cap u_m^{-1}(I_{m})\Big).\label{Lemma_estimation_1_eq1}
\end{equation}
    
  Since $\sigma_n=\mathbb{E}(X_n)=\lambda(u_n^{-1}(I_n)),$ we have that \begin{equation}
      n^{-a}-\dfrac{n^{-a}}{u_n}\leq \lfloor u_n\rfloor\frac{\lambda(I_n)}{u_n}\leq\sigma_n\leq\lceil u_n\rceil\frac{\lambda(I_n)}{u_n}\leq n^{-a}+\dfrac{n^{-a}}{u_n}.\label{Extra}
  \end{equation}
    
    Since $u_n^{-1}$ is summable, it holds $\displaystyle\sum_{n=1}^N\sigma_n\sim \sum_{n=1}^Nn^{-a}\sim N^{1-a}$. 

Notice that for any $n<m$ and $i\in \{0,1,\hdots,\lfloor u_n\rfloor-1\}$, the length of the interval $I_{i,n}$ is $n^{-a}u_n^{-1}$, so $I_{i,n}$ fully contains  
    $$ \Big\lfloor \dfrac{n^{-a}}{u_n}/\dfrac{1}{u_m}\Big\rfloor=\Big\lfloor \dfrac{u_m}{n^au_n}\Big\rfloor$$ consecutive intervals of length $1/u_m$, and it might also contains an extra final interval of smaller length. For each interval $J$ of length $u_m^{-1}$ fully contained in $I_{i,n}$,  $$\lambda(I_{i,n} \cap J\cap  u_m^{-1}(I_{m}))=\frac{1}{m^au_m}.$$ Therefore, we can estimate
    \begin{equation}
        \Big\lfloor \dfrac{u_m}{n^au_n}\Big\rfloor\dfrac{1}{m^au_m}\leq \lambda\Big(I_{i,n}\cap u_m^{-1}(I_m)\Big)\leq\Big\lceil \dfrac{u_m}{n^au_n}\Big\rceil\dfrac{1}{m^au_m}.\label{Lemma_estimation_1_eq2}
    \end{equation}
    
    Also, note that $I_{\lfloor u_n \rfloor,n}$ has smaller measure than the others intervals $I_{i,n}$, so the upper bound in (\ref{Lemma_estimation_1_eq2}) also works for $i=\lfloor u_n \rfloor$, and in that case, we use $0$ as lower bound.  Using (\ref{Lemma_estimation_1_eq2}) in (\ref{Lemma_estimation_1_eq1}), we obtain that

   $$\Big\lfloor \dfrac{u_m}{n^au_n}\Big\rfloor \dfrac{1}{m^au_m}\lfloor u_n\rfloor\leq \lambda\Big(u_n^{-1}(I_{n})\cap u_m^{-1}(I_{m})\Big)\leq \Big\lceil \dfrac{u_m}{n^au_n}\Big\rceil\dfrac{1}{m^au_m}\Big(\lfloor u_n\rfloor+1\Big).$$ 
   We remove the integer parts to write
        $$\Big( \dfrac{u_m}{n^au_n}-1\Big)\dfrac{1}{m^au_m}\Big(u_n-1\Big)\leq \lambda\Big(u_n^{-1}(I_{n})\cap u_m^{-1}(I_{m})\Big)\leq \Big( \dfrac{u_m}{n^au_n}+1 \Big)\dfrac{1}{m^au_m}\Big(u_n+1\Big).$$
        Since
        $$\Big( \dfrac{u_m}{n^au_n}\pm1 \Big)\dfrac{1}{m^au_m}\Big(u_n\pm1\Big)=\dfrac{1}{n^am^a}\pm \dfrac{u_n}{m^{a}u_m}\pm\dfrac{1}{n^am^au_n}+\dfrac{1}{m^au_m},$$
     by rearranging the terms, using (\ref{Extra}) and using that $u_n^{-1}$ and $n^{-a}$ are decreasing sequences, it is easy to see that
    \begin{align*}
        \Big|\lambda\Big(u_n^{-1}(I_{n})\cap u_m^{-1}(I_m)\Big)-\sigma_{n}\sigma_{m} \Big|&\leq \Big|\lambda\Big(u_n^{-1}(I_{n})\cap u_m^{-1}(I_m)\Big)-\dfrac{1}{n^{a}m^{a}} \Big|+\Big|\sigma_{n}\sigma_{m}-\dfrac{1}{n^am^a}\Big| \\
        &\ll m^{-a} \Big(\dfrac{u_n}{u_m}+\dfrac{1}{u_n} \Big).
    \end{align*}
\end{proof}
Considering the sublacunary property of $(u_n)_{n\in\N}$, the previous result shows that $\Cov(X_n,X_{n+d})$ decays rapidly. This decay enables us to establish a version of the Law of Large Numbers for $(X_n)_{n\in\N}$, which describes the asymptotic size of $\Lambda_y\cap [1,N]$. In particular, this partially extends LeVeque's theorem (\ref{Theorem_LeVeque}) in the lacunary case to the setting of real sequences.

\begin{proposition}
 Let $a\in (0,1)$ and $\delta\in [0,1)$ such that $a+\delta<1$. Let $(u_n)_{n\in\N}\subseteq \R$ be a $\delta$-sublacunary sequence, and let $(I_n)_{n\in\N}$ be a sequence of sets such that $\lambda(I_n)=n^{-a}$, and there is $\ell\in\N$ such that each $I_n\subseteq [0,1]$ is union of at most $\ell$ intervals.  
 
 Then, for almost every $y\in[0,1]$,
    $$\lim_{N\to\infty}\dfrac{|\{n\in [1,N]:\ \{u_ny\}\in I_n\}|}{N^{1-a}}=1.$$\label{Proposition_LLN}
\end{proposition}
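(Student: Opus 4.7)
The plan is the standard variance--Chebyshev--Borel--Cantelli route, using the covariance estimate of \cref{Lemma_estimation_1} to control the partial sums $S_N(y):=\sum_{n=1}^N X_n(y)$, together with \cref{Lemma_lacunary_trick} to pass from lacunary subsequences to all $N$.

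First, since $\sum_{n=1}^N\sigma_n\sim N^{1-a}$ by \cref{Lemma_estimation_1}, it suffices to show that $S_N(y)-\mathbb{E}(S_N)=o(N^{1-a})$ for almost every $y$. For this, I would estimate $\Var(S_N)=\sum_{n=1}^N\Var(X_n)+2\sum_{n<m\le N}\Cov(X_n,X_m)$. The diagonal contribution is bounded by $\sum_{n\le N}\sigma_n\ll N^{1-a}$. For the off-diagonal contribution, \cref{Lemma_estimation_1} gives
\[
\sum_{n<m\le N}|\Cov(X_n,X_m)|\ll \sum_{m\le N}m^{-a}\sum_{n<m}\frac{1}{u_n}+\sum_{m\le N}m^{-a}\sum_{n<m}\frac{u_n}{u_m}.
\]
The first double sum is $\ll N^{1-a}$ because $\sum_n u_n^{-1}<\infty$ by \cref{Lemma_sublacunary}. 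For the second, I would use the sublacunary bound from \cref{Lemma_sublacunary}, which yields $u_n/u_m\le \beta^{-(m-n)/m^{\delta}}$ for some $\beta>1$, and then evaluate the resulting geometric inner sum:
\[
\sum_{n<m}\frac{u_n}{u_m}\le \sum_{k=1}^{m-1}\beta^{-k/m^{\delta}}\ll m^{\delta},
\]
where the last bound uses $\delta<1$ so that $1-\beta^{-1/m^\delta}\sim (\log\beta)\,m^{-\delta}$. Summing against $m^{-a}$ produces $\ll N^{1-a+\delta}$, and altogether $\Var(S_N)\ll N^{1-a+\delta}$.

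Since $a+\delta<1$, we have $1-a+\delta<2(1-a)$, hence $\Var(S_N)/N^{2(1-a)}\to 0$. For $\gamma>1$ and $N_k=\lfloor\gamma^k\rfloor$, Chebyshev's inequality gives, for any $\varepsilon>0$,
\[
\sum_{k\ge 1}\lambda\!\left(\bigl|S_{N_k}-\mathbb{E}(S_{N_k})\bigr|>\varepsilon N_k^{1-a}\right)\ll \varepsilon^{-2}\sum_{k\ge 1}\gamma^{k(a+\delta-1)}<\infty,
\]
so by the Borel--Cantelli lemma, $S_{N_k}(y)/N_k^{1-a}\to 1$ for almost every $y\in[0,1]$. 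Finally, the sequence $W_N:=N^{1-a}$ satisfies the regularity hypothesis of \cref{Lemma_lacunary_trick}, since $W_{\lfloor\gamma^{k+1}\rfloor}/W_{\lfloor\gamma^k\rfloor}\to \gamma^{1-a}\to 1$ as $\gamma\to 1^+$; applying that lemma to the non-negative sequence $f_n=X_n$ upgrades the convergence along $(N_k)$ to convergence along all $N$, giving the proposition.

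I expect the main technical obstacle to be the bound on $\sum_{n<m}u_n/u_m$: the sublacunary hypothesis is only a local multiplicative gap condition, so one must convert it into the telescoped exponential bound of \cref{Lemma_sublacunary} and then extract the right geometric decay in the inner sum. The condition $a+\delta<1$ enters precisely here, as it is what makes $N^{1-a+\delta}$ a true $o(N^{2(1-a)})$ and, equivalently, what makes the Borel--Cantelli series along $\lfloor\gamma^k\rfloor$ convergent.
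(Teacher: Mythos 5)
Your proof is correct and follows essentially the same strategy as the paper: bound the variance of the partial sums via \cref{Lemma_estimation_1} and \cref{Lemma_sublacunary} to get $\Var(S_N)\ll N^{1-a+\delta}$, then apply Chebyshev and Borel--Cantelli along a lacunary subsequence and upgrade via \cref{Lemma_lacunary_trick}. The only cosmetic difference is that the paper packages the Chebyshev/Borel--Cantelli/lacunary step by invoking a law-of-large-numbers lemma from \cite{DMS} (cited as Lemma 2.3 there), whereas you carry out those steps explicitly; the content is identical.
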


 \begin{proof} 
      For $n\in \N$, we define $X_n(y):=\mathbbm{1}_{I_n}(\{u_ny\})$ and note that $$\dfrac{|\{n\in [1,N]:\ \{u_ny\}\in I_n\}|}{N^{1-a}}=\dfrac{1}{N^{1-a}}\sum_{n=1}^NX_n(y).$$
   Since Lemma \ref{Lemma_estimation_1} yields to $\sum_{n=1}^N\mathbb{E}(X_n)\sim N^{1-a}$, to prove that $$\lim_{N\to\infty}\dfrac{1}{N^{1-a}}\sum_{n=1}^NX_n(y)=1$$ for almost every $y\in [0,1]$, it suffices verify the assumptions of \cite[Lemma 2.3]{DMS}. In other words, we need to find some $\varepsilon>0$ such that $$\sum_{n=1}^N\sum_{m=1}^N|\Cov(X_n,X_{n+m})|\ll N^{2-2a-\varepsilon}.$$

      Let $N\in\N$ and $n,m\leq N$. Using \cref{Lemma_sublacunary} and \cref{Lemma_estimation_1}, we can find $\beta>1$ such that
      \[|\Cov(X_n,X_{n+m})|\ll (n+m)^{-a}\Big(\dfrac{1}{u_n}+\frac{u_n}{u_{n+m}}\Big)\ll \frac{m^{-a}}{u_n}+n^{-a}\beta^{-\frac{m}{(n+m)^\delta}}.\]
      Summing up, we obtain
      \begin{equation*}
          \begin{split}
\sum_{n=1}^N\sum_{m=1}^N|\Cov(X_n,X_{n+m})|&\ll \sum_{n=1}^Nu_n^{-1}\sum_{m=1}^Nm^{-a}+\sum_{n=1}^Nn^{-a}\sum_{m=1}^N \beta^{-\frac{m}{(2N)^\delta}}\\
&\ll N^{1-a}+N^{1-a}\dfrac{1}{1-\beta^{-1/(2N)^\delta}}\\
&\ll N^{1-a+\delta}=N^{2-2a-\varepsilon},
\end{split}
      \end{equation*}
where $\varepsilon:=1-a-\delta>0$, concluding the result.
 \end{proof}

To control dependence in the study of pointwise convergence of random ergodic averages, we aim to establish decay estimates for higher-order correlations of the random variables. In particular, we will require bounds on the four-fold correlation. Following a similar strategy to  \cref{Lemma_estimation_1}, we obtain the next inequality.

 \begin{lemma}  Let $a\in (0,1)$ and $\delta\in [0,1)$. Let $(u_n)_{n\in\N}\subseteq \R$ be a $\delta$-sublacunary sequence, and let $(I_n)_{n\in\N}$ be a sequence of sets such that $\lambda(I_n)=n^{-a}$, and there is $\ell\in\N$ such that each $I_n\subseteq [0,1]$ is union of at most $\ell$ intervals.  
 
    Then, considering the notation of (\ref{Random_variables}), for any $n_1< n_2< n_3< n_4$,
\begin{align*}
    \left|\mathbb{E}(Y_{n_1}Y_{n_2}Y_{n_3}Y_{n_4})\right|
\ll n_3^{-a}n_2^{-a}\Big(
\frac{1}{u_{n_1}}
+ \frac{u_{n_1}}{u_{n_2}}
+ \frac{u_{n_2}}{u_{n_3}}
\Big)+n_3^{-a}\frac{u_{n_3}}{u_{n_4}}\Big(n_1^{-a}+\frac{1}{u_{n_1}}+\frac{u_{n_2}}{u_{n_3}}\Big).
\end{align*}\label{Lemma_estimates_2}
 \end{lemma}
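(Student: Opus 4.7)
The plan is to pair the random variables as $(Y_{n_1}Y_{n_2})(Y_{n_3}Y_{n_4})$ and to show that, on each interval $J$ of the common refinement of $A_{n_1}:=u_{n_1}^{-1}(I_{n_1})$ and $A_{n_2}:=u_{n_2}^{-1}(I_{n_2})$, the inner integral $\int_J Y_{n_3}Y_{n_4}\,d\lambda$ is essentially a pure error, because the $|J|\sigma_{n_3}\sigma_{n_4}$-style main terms cancel. Setting $h:=Y_{n_1}Y_{n_2}$, the function $h$ takes one of four constant values on each cell of the common refinement, which consists of at most $K\ll u_{n_2}$ intervals (since $A_{n_1}$ and $A_{n_2}$ together contribute at most $O(u_{n_2})$ endpoints in $[0,1]$). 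Hence
\[
\mathbb{E}(Y_{n_1}Y_{n_2}Y_{n_3}Y_{n_4}) = \sum_{J} h(J)\int_{J} Y_{n_3}Y_{n_4}\,d\lambda.
\]

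The heart of the argument is estimating the inner integral. Repeating the interval-counting argument from the proof of Lemma \ref{Lemma_estimation_1}, for every interval $J\subseteq[0,1]$ one has $\lambda(J\cap A_{n_m}) = |J|\sigma_{n_m} + O(n_m^{-a}/u_{n_m})$; iterating one level deeper, by decomposing $J\cap A_{n_3}$ into at most $|J|u_{n_3}+O(1)$ subintervals and applying the same estimate with $m=4$ on each,
\[
\lambda(J\cap A_{n_3}\cap A_{n_4}) = \sigma_{n_4}\lambda(J\cap A_{n_3}) + O\!\left((|J|u_{n_3}+1)\tfrac{n_4^{-a}}{u_{n_4}}\right).
\]
Expanding $Y_{n_3}Y_{n_4}=X_{n_3}X_{n_4}-\sigma_{n_3}X_{n_4}-\sigma_{n_4}X_{n_3}+\sigma_{n_3}\sigma_{n_4}$ and substituting, the four main terms cancel pairwise and leave
\[
\left|\int_{J} Y_{n_3}Y_{n_4}\,d\lambda\right| \ll \frac{|J|u_{n_3}n_4^{-a}}{u_{n_4}} + \frac{n_4^{-a}}{u_{n_4}}.
\]

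Summing against $|h(J)|$ splits the bound into two pieces: the first is $(u_{n_3}n_4^{-a}/u_{n_4})\|h\|_{L^1}$, and the second is at most $(n_4^{-a}/u_{n_4})K\ll n_4^{-a}u_{n_2}/u_{n_4}$. For the $L^1$-norm, a direct computation on the four cells $A_{n_1}^{(\epsilon_1)}\cap A_{n_2}^{(\epsilon_2)}$ produces the exact identity
\[
\|h\|_{L^1}=4\sigma_{n_1}\sigma_{n_2}(1-\sigma_{n_1})(1-\sigma_{n_2}) + (1-2\sigma_{n_1})(1-2\sigma_{n_2})\,\epsilon_{12},
\]
where $\epsilon_{12}:=\lambda(A_{n_1}\cap A_{n_2})-\sigma_{n_1}\sigma_{n_2}$; combining this with Lemma \ref{Lemma_estimation_1} gives
\[
\|h\|_{L^1} \ll n_1^{-a}n_2^{-a} + n_2^{-a}\!\left(\tfrac{1}{u_{n_1}} + \tfrac{u_{n_1}}{u_{n_2}}\right).
\]
Multiplying out produces four monomials; each is dominated by one of the six terms in the stated bound via the elementary inequalities $n_i^{-a}n_j^{-a}\leq n_k^{-a}$ for any $k\leq j$ (automatic from $n_k\leq n_j\leq n_i n_j$) and $n_4^{-a}u_{n_3}/u_{n_4}\leq n_3^{-a}$.

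The main obstacle is securing the cancellation in the inner integral: one must keep careful track of the $\sigma_{n_j}$-main terms to verify that they cancel to the stated order, since only the boundary errors survive. A secondary subtlety lies in the bilinear bound $\|h\|_{L^1}\ll \sigma_{n_1}\sigma_{n_2}$, because the naive Cauchy--Schwarz estimate $\|h\|_{L^1}\leq\sqrt{\sigma_{n_1}\sigma_{n_2}}$ would fall short of what the target bound requires, and one really needs the exact four-cell decomposition to extract the full bilinear scaling.
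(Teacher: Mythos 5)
Your proof is correct and takes a genuinely different route from the paper's. The paper expands $\mathbb{E}(Y_{n_1}Y_{n_2}Y_{n_3}Y_{n_4})$ into seven covariance terms (equation (\ref{Lemma_covariance_eq1})), then estimates each of them by applying the interval-counting argument of Lemma~\ref{Lemma_estimation_1} directly to the triple and quadruple products $\mathbb{E}(X_nX_mX_r)$ and $\mathbb{E}(X_{n_1}X_{n_2}X_{n_3}X_{n_4})$ (equations (\ref{Lemma_covariance_eq5}) and (\ref{Lemma_covariance_eq7})), performing a separate error expansion for each. You instead pair the variables as $h \cdot Y_{n_3}Y_{n_4}$ with $h := Y_{n_1}Y_{n_2}$ constant on the $K \ll u_{n_2}$ cells of the common refinement of $A_{n_1}$ and $A_{n_2}$, and show via the single identity $\lambda(J\cap A_{n_3}\cap A_{n_4}) = \sigma_{n_4}\lambda(J\cap A_{n_3}) + O((|J|u_{n_3}+1)n_4^{-a}/u_{n_4})$ that the $\sigma_{n_j}$-main terms in $\int_J Y_{n_3}Y_{n_4}$ cancel exactly, leaving only the boundary error $\ll (|J|u_{n_3}+1)n_4^{-a}/u_{n_4}$; summing against $|h(J)|$, using $\|h\|_{L^1}\ll n_1^{-a}n_2^{-a}+|\epsilon_{12}|$ from your exact four-cell identity and $\sum_J|h(J)|\leq K$, gives a bound that is in fact a \emph{subset} of the terms in the stated estimate (each of your four monomials is dominated by one of the six target monomials via $n_4^{-a}n_2^{-a}\leq n_3^{-a}$ and $u_{n_1}/u_{n_2}\leq 1$). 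Your approach buys modularity and a cleaner cancellation mechanism: rather than tracking errors in a full four-fold expansion like (\ref{expansion}) and (\ref{Lemma_covariance_eq7}), you localize the equidistribution of $Y_{n_3}Y_{n_4}$ to a single conditioning step, and you never need to estimate $\mathbb{E}(X_{n_1}X_{n_2}X_{n_3}X_{n_4})$ explicitly. The identification that Cauchy--Schwarz for $\|h\|_{L^1}$ is insufficient and that the exact bilinear scaling $\|h\|_{L^1}\ll\sigma_{n_1}\sigma_{n_2}+|\epsilon_{12}|$ is needed is exactly the right subtlety to flag.
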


 \begin{proof} Similarly to \cref{Lemma_estimation_1}, for clarity in the exposition  we assume that $\ell=1$ and every interval is open, emphasizing that the proof adapts to the general case. For $n\in\N$, we will write $\gamma_n:=n^{-a}$ and we will constantly use that $(u_n^{-1})_{n\in\N}$, $(\sigma_n)_{n\in\N}$ and $(\gamma_n)_{n\in\N}$ are decreasing, and that $\sigma_n\ll \gamma_n$.
 
   For $n_1<n_2<n_3<n_4$, we can write
\begin{equation}
\begin{split}
\mathbb{E}\left( Y_{n_1} Y_{n_2} Y_{n_3} Y_{n_4} \right) &=\Cov(X_{n_1},X_{n_2}X_{n_3}X_{n_4})-\sigma_{n_4}\Cov(X_{n_1},X_{n_2}X_{n_3})\\
&\quad-\sigma_{n_3}\Cov(X_{n_1},X_{n_2}X_{n_4})+\sigma_{n_3}\sigma_{n_4}\Cov(X_{n_1},X_{n_2})\\
&\quad-\sigma_{n_2}\Cov(X_{n_1},X_{n_3}X_{n_4})+\sigma_{n_2}\sigma_{n_4}\Cov(X_{n_1},X_{n_3})\\
&\quad+\sigma_{n_2}\sigma_{n_3}\Cov(X_{n_1},X_{n_4}).
\end{split}  \label{Lemma_covariance_eq1} 
\end{equation}
 By denoting $\xi(n_1,n_2):=\gamma_{n_2}(u_{n_1}^{-1}+u_{n_1}u_{n_2}^{-1})$, we use \cref{Lemma_estimation_1}  to obtain 
\begin{equation}
\begin{split}
 &\Big| \sigma_{n_3}\sigma_{n_4}\Cov(X_{n_1},X_{n_2})+
\sigma_{n_2}\sigma_{n_4}\Cov(X_{n_1},X_{n_3})
+\sigma_{n_2}\sigma_{n_3}\Cov(X_{n_1},X_{n_4})\Big|\\
&\ll \sigma_{n_2}\sigma_{n_3}\xi(n_1,n_2)\ll \gamma_{n_2}\gamma_{n_3}\xi(n_1,n_2).\label{Lemma_covariance_eq2} 
\end{split}
\end{equation}

On the other hand, for any $n< m<r$, 
\begin{equation}
\begin{split}
    |\Cov(X_{n},X_mX_r)|\leq& \left|\mathbb{E}(X_nX_mX_r)-\sigma_n\sigma_m\sigma_r\right|+\sigma_n\left|\Cov(X_m,X_r)\right|\\
    \ll&  \left|\mathbb{E}(X_nX_mX_r)-\gamma_n\gamma_m\gamma_r\right|\\
    &+\left|\sigma_n\sigma_m\sigma_r-\gamma_n\gamma_m\gamma_r\right|+\sigma_n\xi(m,r).
\end{split}\label{Lemma_covariance_eq3} 
\end{equation}

Seeking to estimate $\mathbb{E}(X_nX_mX_r)=\lambda\Big(u_n^{-1}(I_{n})\cap u_m^{-1}(I_m)\cap u_r^{-1}(I_r)\Big),$
    we rely on the same argument (and notation) used in the proof of \cref{Lemma_estimation_1}. 
    For any $j\in\{0,\hdots, \lfloor u_m\rfloor-1\}$, 
    \begin{equation}
        \Big\lfloor \frac{\gamma_m u_r}{u_m}\Big\rfloor \frac{\gamma_r}{u_r}\leq \lambda(I_{j,m}\cap u_r^{-1}(I_r))\leq  \dfrac{\gamma_r}{u_r}\Big\lceil \frac{\gamma_m u_r}{u_m}\Big\rceil.\label{Lemma_covariance_eq4}
    \end{equation}
  Also, the interval $I_{\lfloor u_m \rfloor,m}$ is smaller than the others. In such a case, the upper bound still holds and we use $0$ as lower bound.

     For any $i\in\{0,\hdots, \lfloor u_n\rfloor-1\}$,  let $k\in\N$ be the least integer such that
     $\frac{k}{u_m}\in I_{i,n}$ and let $k'\in \N$ be the greatest integer such that $\frac{k'}{u_m}\in I_{i,n}$. 
     Notice that $I_{j,m}\subseteq I_{i,n}$ for every $j\in \{k,\cdots,k'-1\}$, and $I_{i,n}\cap I_{j,m}=\emptyset$ if $j<k-1$ or $j>k'$. Therefore,
\begin{equation}
    \sum_{j=k}^{k'-1}\lambda(I_{j,m}\cap u_r^{-1}(I_r))\leq \lambda(I_{i,n}\cap u_m^{-1}(I_m)\cap u_r^{-1}(I_r))\leq \sum_{j=k-1}^{k'}\lambda( I_{j,m}\cap u_r^{-1}(I_r)).\label{Extra1}
\end{equation}
Since $I_{i,n}$ has length $\gamma_n/u_n$, we have that
\begin{equation}
    \frac{\gamma_nu_m}{u_n}-2\leq k'-k\leq \frac{\gamma_nu_m}{u_n}.\label{Extra2}
\end{equation}

Using (\ref{Lemma_covariance_eq4}), (\ref{Extra1}) and (\ref{Extra2}), we conclude that for every $i\in \{0,\cdots,\lfloor u_n\rfloor-1\}$,
     $$\Big(\dfrac{u_r\gamma_m}{u_m}- 2\Big)\Big(\dfrac{u_m\gamma_n}{u_n}- 2\Big)\dfrac{\gamma_r}{u_r}\leq \lambda(I_{i,n}\cap u_m^{-1}(I_m)\cap u_r^{-1}(I_r))\leq \Big(\dfrac{u_r\gamma_m}{u_m}+ 2\Big)\Big(\dfrac{u_m\gamma_n}{u_n}+ 2\Big)\dfrac{\gamma_r}{u_r}.$$
It follows that 
    
    \begin{equation}
        \begin{split}
            \Big(\dfrac{u_r\gamma_m}{u_m}- 2\Big)\Big(\dfrac{u_m\gamma_n}{u_n}- 2\Big)\dfrac{\gamma_r}{u_r}\Big(u_n- 1\Big)&\leq \lambda\Big(u_n^{-1}(I_{n})\cap u_m^{-1}(I_m)\cap u_r^{-1}(I_r)\Big)\\
            &\leq \Big(\dfrac{u_r\gamma_m}{u_m}+ 2\Big)\Big(\dfrac{u_m\gamma_n}{u_n}+ 2\Big)\dfrac{\gamma_r}{u_r}\Big(u_n+ 1\Big).
        \end{split}\label{Lemma_covariance_eq5}
    \end{equation}
    Expanding the last expression,
\begin{equation}
\begin{split}
&\Big(\dfrac{u_r\gamma_m}{u_m}\pm 2\Big)\Big(\dfrac{u_m\gamma_n}{u_n}\pm 2\Big)\dfrac{\gamma_r}{u_r}\Big(u_n\pm 1\Big)\\
&=\gamma_n\gamma_m\gamma_r\pm \dfrac{\gamma_n\gamma_m\gamma_r}{u_n}\pm 2\dfrac{\gamma_m\gamma_ru_n}{u_m}+2\dfrac{\gamma_m\gamma_r}{u_m}\\
&\pm2\dfrac{u_m\gamma_n\gamma_r}{u_r}+2\dfrac{u_m\gamma_n\gamma_r}{u_nu_r}+4\dfrac{u_n\gamma_r}{u_r}\pm 4\dfrac{\gamma_r}{u_r}.
\end{split}\label{expansion}
\end{equation}

By monotonicity of the sequences, we deduce that
\begin{equation}| \mathbb{E}(X_nX_mX_r)-\gamma_n\gamma_m\gamma_r|\ll \xi(n,m,r):=\frac{\gamma_r}{u_n}
+ \frac{\gamma_m \gamma_r u_n}{u_m}
+ \frac{\gamma_n \gamma_r u_m}{u_r}
+ \frac{\gamma_r u_n}{u_r}.\label{Extra4}
\end{equation}
Also, from (\ref{Extra}), we can  see that \begin{equation*}
 \left|\sigma_n\sigma_m\sigma_r-\gamma_n\gamma_m\gamma_r\right|\ll \dfrac{\gamma_n\gamma_m\gamma_r}{u_n}\ll \xi(n,m,r).\label{Extra0}   
\end{equation*}
Hence, we conclude from (\ref{Lemma_covariance_eq3}) that
\begin{align*}
    &\Big|-\sigma_{n_4}\Cov(X_{n_1},X_{n_2}X_{n_3})
-\sigma_{n_3}\Cov(X_{n_1},X_{n_2}X_{n_4})
-\sigma_{n_2}\Cov(X_{n_1},X_{n_3}X_{n_4})\Big|\\
&\ll\sigma_{n_4}\xi(n_1,n_2,n_3)+\sigma_{n_3}\xi(n_1,n_2,n_4)+\sigma_{n_2}\xi(n_1,n_3,n_4)\\
&\qquad +\sigma_{n_4}\sigma_{n_1}\xi(n_2,n_3)+\sigma_{n_3}\sigma_{n_1}\xi(n_2,n_4)+\sigma_{n_2}\sigma_{n_1}\xi(n_3,n_4).
\end{align*}
It is not hard to see that 
\begin{equation*}
    \begin{split}
    \sigma_{n_4}\sigma_{n_1}\xi(n_2,n_3)+\sigma_{n_3}\sigma_{n_1}\xi(n_2,n_4)+\sigma_{n_2}\sigma_{n_1}\xi(n_3,n_4)\ll \gamma_{n_1}\gamma_{n_2}\gamma_{n_3}\Big(\frac{1}{u_{n_2}}+\frac{u_{n_2}}{u_{n_3}}+\frac{u_{n_3}}{u_{n_4}}\Big).    
    \end{split}
\end{equation*}
On the other hand, 
\begin{equation*}
    \begin{split}
&\sigma_{n_4}\xi(n_1,n_2,n_3)+\sigma_{n_3}\xi(n_1,n_2,n_4)+\sigma_{n_2}\xi(n_1,n_3,n_4)\\
&\ll\gamma_{n_2}\gamma_{n_3}\Big(
\frac{1}{u_{n_1}}
+ \frac{\gamma_{n_4}\,u_{n_1}}{u_{n_2}}
+ \frac{\gamma_{n_1}\,u_{n_2}}{u_{n_3}}
+ \frac{\gamma_{n_1}\,u_{n_3}}{u_{n_4}}
+ \frac{u_{n_1}}{u_{n_3}}
\Big).
    \end{split}
\end{equation*}
Putting the last three inequalities together and using the monotonicity of the sequences, it follows that 
\begin{equation}
    \begin{split}
        &\Big|-\sigma_{n_4}\Cov(X_{n_1},X_{n_2}X_{n_3})
-\sigma_{n_3}\Cov(X_{n_1},X_{n_2}X_{n_4})
-\sigma_{n_2}\Cov(X_{n_1},X_{n_3}X_{n_4})\Big|\\
&\ll \gamma_{n_2}\gamma_{n_3}\Big(
\frac{1}{u_{n_1}}
+ \frac{\gamma_{n_4}\,u_{n_1}}{u_{n_2}}
+ \frac{\gamma_{n_1}\,u_{n_2}}{u_{n_3}}
+ \frac{\gamma_{n_1}\,u_{n_3}}{u_{n_4}}
+ \frac{u_{n_1}}{u_{n_3}}
\Big).
    \end{split}\label{Lemma_covariance_eq6}
\end{equation}

Finally, we proceed to estimate
$$\Cov(X_{n_1},X_{n_2}X_{n_3}X_{n_4})=\mathbb{E}(X_{n_1}X_{n_2}X_{n_3}X_{n_4})-\sigma_{n_1}\mathbb{E}(X_{n_2}X_{n_3}X_{n_4}).$$ 
Since
\begin{align*}
|\Cov(X_{n_1},X_{n_2}X_{n_3}X_{n_4})|\ll& \Big|\mathbb{E}(X_{n_1}X_{n_2}X_{n_3}X_{n_4})-\gamma_{n_1}\gamma_{n_2}\gamma_{n_3}\gamma_{n_4}\Big|+\sigma_{n_1}\xi(n_2,n_3,n_4)\\
&+\Big|\gamma_{n_1}\gamma_{n_2}\gamma_{n_3}\gamma_{n_4}-\sigma_{n_1}\sigma_{n_2}\sigma_{n_3}\sigma_{n_4}\Big|\\
&\ll \Big|\mathbb{E}(X_{n_1}X_{n_2}X_{n_3}X_{n_4})-\gamma_{n_1}\gamma_{n_2}\gamma_{n_3}\gamma_{n_4}\Big|+\sigma_{n_1}\xi(n_2,n_3,n_4),
\end{align*}
it only remains to study $\mathbb{E}(X_{n_1}X_{n_2}X_{n_3}X_{n_4})$. To do this, we follow a similar idea as in (\ref{Lemma_covariance_eq5}), obtaining
\begin{equation}
    \begin{split}
    &\Big(\dfrac{\gamma_{n_3}u_{n_4}}{u_{n_3}}- 2\Big)\Big(\dfrac{\gamma_{n_2}u_{n_3}}{u_{n_2}}- 2\Big)\Big(\dfrac{\gamma_{n_1}u_{n_2}}{u_{n_1}}- 2\Big)\dfrac{\gamma_{n_4}}{u_{n_4}}\Big(u_{n_1}- 1\Big)\\
        &\leq \mathbb{E}(X_{n_1}X_{n_2}X_{n_3}X_{n_4})\\
        &\leq \Big(\dfrac{\gamma_{n_3}u_{n_4}}{u_{n_3}}+ 2\Big)\Big(\dfrac{\gamma_{n_2}u_{n_3}}{u_{n_2}}+ 2\Big)\Big(\dfrac{\gamma_{n_1}u_{n_2}}{u_{n_1}}+ 2\Big)\dfrac{\gamma_{n_4}}{u_{n_4}}\Big(u_{n_1}+ 1\Big).
    \end{split}\label{Lemma_covariance_eq7}
\end{equation}
Conveniently, we can rewrite
\begin{equation*}
    \begin{split}
        &\Big(\dfrac{\gamma_{n_3}u_{n_4}}{u_{n_3}}\pm 2\Big)\Big(\dfrac{\gamma_{n_2}u_{n_3}}{u_{n_2}}\pm 2\Big)\Big(\dfrac{\gamma_{n_1}u_{n_2}}{u_{n_1}}\pm 2\Big)\dfrac{\gamma_{n_4}}{u_{n_4}}\Big(u_{n_1}\pm 1\Big)\\
        &=\Big(\gamma_{n_4}\pm2\dfrac{\gamma_{n_4}}{\gamma_{n_3}}\dfrac{u_{n_3}}{u_{n_4}}\Big) \Bigg(\Big(\dfrac{\gamma_{n_2}u_{n_3}}{u_{n_2}}\pm 2\Big)\Big(\dfrac{\gamma_{n_1}u_{n_2}}{u_{n_1}}\pm 2\Big)\dfrac{\gamma_{n_3}}{u_{n_3}}\Big(u_{n_1}\pm 1\Big)\Bigg),
    \end{split}
\end{equation*}
This makes explicit the connection with the expression obtained in (\ref{expansion}). Then, since $\frac{\gamma_{n_4}}{\gamma_{n_3}}\in (0,1)$ and recalling (\ref{Extra4}), we can then derive that
\begin{equation}
    \begin{split}
        &|\mathbb{E}(X_{n_1}X_{n_2}X_{n_3}X_{n_4})-\gamma_{n_1}\gamma_{n_2}\gamma_{n_3}\gamma_{n_4}|\\
        &\ll \gamma_{n_1}\gamma_{n_2}\gamma_{n_3}\frac{u_{n_3}}{u_{n_4}}+\xi(n_1,n_2,n_3)\Big(\gamma_{n_4}+\frac{u_{n_3}}{u_{n_4}}\Big)\\
        &\ll \gamma_{n_4}\xi(n_1,n_2,n_3) +\gamma_{n_3}\frac{u_{n_3}}{u_{n_4}}\Big(\gamma_{n_1}\gamma_{n_2}+\frac{1}{u_{n_1}}+\frac{\gamma_{n_2}u_{n_1}}{u_{n_2}}+\frac{u_{n_2}}{u_{n_3}}\Big).
        \end{split}\label{Lemma_covariance_eq8}
\end{equation}
Combining (\ref{Lemma_covariance_eq1}) with  (\ref{Lemma_covariance_eq2}), (\ref{Lemma_covariance_eq6}) and (\ref{Lemma_covariance_eq8}), it follows
\begin{align*}
    \left|\mathbb{E}(Y_{n_1}Y_{n_2}Y_{n_3}Y_{n_4})\right|
&\ll \gamma_{n_3}\Bigg(\gamma_{n_2}\Big(
\frac{1}{u_{n_1}}
+ \frac{\gamma_{n_4}\,u_{n_1}}{u_{n_2}}
+ \frac{\gamma_{n_1}\,u_{n_2}}{u_{n_3}}
+ \frac{\gamma_{n_1}\,u_{n_3}}{u_{n_4}}
+ \frac{u_{n_1}}{u_{n_3}}
\Big)\\
+\gamma_{n_2}^2&\Big(\frac{1}{u_{n_1}}+\frac{u_{n_1}}{u_{n_2}}\Big)+\frac{u_{n_3}}{u_{n_4}}\Big(\gamma_{n_1}\gamma_{n_2}+\frac{1}{u_{n_1}}+\frac{\gamma_{n_2}u_{n_1}}{u_{n_2}}+\frac{u_{n_2}}{u_{n_3}}\Big)\Bigg).\\
\end{align*}
The result follows by rearranging the terms, using that $\frac{u_{n_1}}{u_{n_3}}\leq \frac{u_{n_1}}{u_{n_2}}$ and $\gamma_n\in (0,1)$.
 \end{proof}

\section{Proof of \texorpdfstring{\cref{thmA}}{Theorem A}}
In this section, we present the proof of \cref{thmA}. 

To study $\overline{\dimM}(A\cap \Lambda_y)$, we need to analyze the behavior of
\begin{equation}
    \dfrac{|A\cap \Lambda_y\cap [1,N]|}{N^\gamma}=\dfrac{1}{N^\gamma}\sum_{n=1}^NX_{n}(y)\mathbbm{1}_{A}(n).\label{A_1}
\end{equation}
Our strategy is to compare (\ref{A_1}) with its expectation. As a first step, we need to understand the behavior of the expectation itself, 
which is provided by the following result.

  \begin{proposition}
 Let $A\subseteq \mathbb{N}$ and $a\in (0,1)$. Then,
    \begin{equation}\limsup_{N\to \infty}\dfrac{1}{N^{\gamma}}\sum_{n=1}^Nn^{-a}\mathbbm{1}_{A}(n)=\begin{cases}
 0 &\text{ if } \gamma>\overline{\gamma} \\ 
 +\infty &\text{ if } 0<\gamma<\overline{\gamma},\\
\end{cases}\label{thmA_prop_eq}
\end{equation} 
where $\displaystyle\overline{\gamma}=\max\left\{0, \overline{\dimM}(A)-a \right\}.$ 
     
     If $\dimM(A)$ is well-defined, the result holds for $\lim$ instead of $\limsup$, i.e.,
     \[\lim_{N\to \infty}\dfrac{1}{N^{\gamma}}\sum_{n=1}^Nn^{-a}\mathbbm{1}_{A}(n)=\limsup_{N\to \infty}\dfrac{1}{N^{\gamma}}\sum_{n=1}^Nn^{-a}\mathbbm{1}_{A}(n).\]\label{thmA_prop}
\end{proposition}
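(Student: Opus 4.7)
The strategy is classical Abel summation combined with the definition of upper mass dimension. Write $A(N) := |A \cap [1,N]|$, so that $\mathbbm{1}_A(n) = A(n) - A(n-1)$ with $A(0) = 0$. Summation by parts gives
$$S_N := \sum_{n=1}^N n^{-a} \mathbbm{1}_A(n) = A(N)\, N^{-a} + \sum_{n=1}^{N-1} A(n)\bigl(n^{-a} - (n+1)^{-a}\bigr).$$
The mean value theorem shows $n^{-a} - (n+1)^{-a} \asymp n^{-a-1}$, so the problem reduces to estimating $A(N)N^{-a}$ and the weighted sums $\sum_{n \le N} A(n)\, n^{-a-1}$, which is exactly where the upper mass dimension controls the growth.

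\textbf{Upper bound for $\gamma > \overline{\gamma}$.} By Definition \ref{Definition_dimensionality}, for every $\varepsilon > 0$ there is a constant $C_\varepsilon$ with $A(n) \le C_\varepsilon\, n^{\overline{\dimM}(A) + \varepsilon}$ for all $n \ge 1$. I split into three subcases:
\begin{enumerate}
\item If $\overline{\dimM}(A) < a$, choose $\varepsilon$ small so that $\overline{\dimM}(A) + \varepsilon - a < 0$; then $A(n)\, n^{-a-1}$ is summable, $S_N$ stays bounded, and $S_N / N^\gamma \to 0$ for every $\gamma > 0 = \overline{\gamma}$.
\item If $\overline{\dimM}(A) = a$, the same bound gives $S_N \ll N^\varepsilon$ for any $\varepsilon > 0$, so again $S_N / N^\gamma \to 0$ for every $\gamma > 0 = \overline{\gamma}$.
\item If $\overline{\dimM}(A) > a$, then $\overline{\gamma} = \overline{\dimM}(A) - a > 0$; the above gives $S_N \ll N^{\overline{\gamma} + \varepsilon}$, and choosing $\varepsilon < \gamma - \overline{\gamma}$ yields $S_N / N^\gamma \to 0$.
\end{enumerate}

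\textbf{Lower bound for $0 < \gamma < \overline{\gamma}$.} This range is nonempty only when $\overline{\dimM}(A) > a$. By definition of $\limsup$, there exists a subsequence $(N_k)_{k \in \N}$ along which $A(N_k) \ge N_k^{\overline{\dimM}(A) - \varepsilon}$ for $k$ large, for any fixed $\varepsilon > 0$. Dropping all but the leading (nonnegative) term in the Abel expansion,
$$\frac{S_{N_k}}{N_k^\gamma} \ \ge\ \frac{A(N_k)\, N_k^{-a}}{N_k^\gamma} \ \ge\ N_k^{\overline{\gamma} - \gamma - \varepsilon},$$
which tends to $+\infty$ once $\varepsilon < \overline{\gamma} - \gamma$. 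Hence $\limsup S_N / N^\gamma = +\infty$.

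\textbf{Upgrading to a limit when $\dimM(A)$ exists.} The upper bound argument above already produces a genuine limit (zero) in the regime $\gamma > \overline{\gamma}$, since it controls $S_N$ itself and not only its subsequential behaviour. For $0 < \gamma < \overline{\gamma}$, the lower bound $A(N) \ge N^{\dimM(A) - \varepsilon}$ now holds for \emph{all} sufficiently large $N$ (not only along a subsequence), so the same computation yields $S_N / N^\gamma \to +\infty$. The main care to take is the boundary case $\overline{\dimM}(A) = a$, where $\overline{\gamma} = 0$ and the Abel estimate only gives polylogarithmic-scale growth $S_N \ll N^\varepsilon$; but this is precisely what is needed for the first branch in \eqref{thmA_prop_eq}. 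No serious obstacle is expected; the whole argument is a careful bookkeeping of exponents in the summation-by-parts decomposition.
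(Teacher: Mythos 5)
Your proposal is correct and follows essentially the same route as the paper: Abel summation decomposes the sum into the boundary term $A(N)N^{-a}$ and the tail $\sum A(n)(n^{-a}-(n+1)^{-a})$, both of which are controlled directly from the growth of $A(N)$ dictated by $\overline{\dimM}(A)$, with the nonnegativity of the second term used for the divergence half. The paper's proof is the same argument written with slightly different bookkeeping (inserting $N^{-\overline{\gamma}-2\eta/3}\leq M^{-\overline{\gamma}-2\eta/3}$ rather than your cleaner exponent-counting), so no substantive difference.
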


\begin{proof}
 Let $N\in\N$ and $\gamma>0$. Using summation by parts, we can write
    \begin{equation*}
    \begin{split}
        \dfrac{1}{N^\gamma}\sum_{n=1}^N n^{-a}\mathbbm{1}_{A}(n)&=S_1(\gamma,N)+S_2(\gamma,N),
    \end{split}
    \end{equation*}
where
\begin{align*}
    &S_1(\gamma,N):=\dfrac{1}{N^{\gamma+a}}\sum_{n=1}^N\mathbbm{1}_{A}(n) \quad \text{and} \\
    &S_2(\gamma,N):=\dfrac{1}{N^\gamma}\sum_{M=1}^{N-1}(M^{-a}-(M+1)^{-a})\sum_{n=1}^M\mathbbm{1}_{A}(n).
\end{align*}
 If $\gamma>\overline{\gamma},$ then $\gamma+a>\overline{\dimM}(A)$. Hence,

\[\limsup_{N\to \infty}\dfrac{1}{N^{\gamma+a}}\sum_{n=1}^N\mathbbm{1}_{A}(n)=\limsup_{N\to \infty}\dfrac{|A\cap [1,N]|}{N^{\gamma+a}}=0.\]
On the other hand, let $\gamma\in (0,\overline{\gamma})$. In such a case, $\overline{\gamma}=\overline{\dimM}(A)-a>0$, and therefore, $\gamma+a<\overline{\dimM}(A)$.
We conclude that

\begin{equation}
    \limsup_{N\to \infty}S_1(\gamma,N)=\begin{cases}
0 & \text{ if } \gamma>\overline{\gamma} \\ 
+\infty & \text{ if } 0<\gamma<\overline{\gamma}.
\end{cases}\label{Eq_S1}
\end{equation}

Since $n^{-a}$ is decreasing, $S_2(\gamma,N)\geq 0$. In particular, for any $\gamma\in (0,\overline{\gamma}),$
\begin{equation}
    \limsup_{N\to \infty}\dfrac{1}{N^\gamma}\sum_{n=1}^N n^{-a}\mathbbm{1}_{A}(n)=\limsup_{N\to \infty} S_1(\gamma,N)+S_2(\gamma,N)=+\infty.\label{Eq_case_1}\end{equation}

Now, let $\gamma>\overline{\gamma}$ and define  $\eta:=\gamma-\overline{\gamma}>0$. By the mean value theorem,
\begin{align*}
  S_2(\gamma,N)
  &\ll  \dfrac{1}{N^\gamma}\sum_{M=1}^{N-1}M^{-1-a}\sum_{n=1}^M\mathbbm{1}_{A}(n).\end{align*}
  Noticing that the sequence  $$\dfrac{1}{M^{\overline{\dimM}(A)+\eta/2}}\sum_{n=1}^M\mathbbm{1}_{A}(n)$$
  is bounded, we have that
\begin{align*}
  \dfrac{1}{N^\gamma}\sum_{M=1}^{N-1}M^{-1-a}\sum_{n=1}^M\mathbbm{1}_{A}(n)&=\dfrac{1}{N^\gamma}\sum_{M=1}^{N-1}M^{-1-a+\overline{\dim}_\text{M}(A)+\eta/2}\dfrac{1}{M^{\overline{\dimM}(A)+\eta/2}}\sum_{n=1 }^M\mathbbm{1}_{A}(n)  \\
  &\ll \dfrac{1}{N^\gamma}\sum_{M=1}^{N-1}M^{-1-a+\overline{\dimM}(A)+\eta/2} \\
  &=\dfrac{1}{N^{\overline{\gamma}+\eta}}\sum_{M=1}^{N-1}M^{-1-a+\overline{\dimM}(A)+\eta/2}.
\end{align*}

Using $N^{-\overline{\gamma}-2\eta/3}\leq M^{-\overline{\gamma}-2\eta/3} $, it follows 
\[S_2(\gamma,N)\ll \dfrac{1}{N^{\overline{\gamma}+\eta}}\sum_{M=1}^{N-1}M^{-1-a+\overline{\dimM}(A)+\eta/2}\leq \dfrac{1}{N^{\eta/3}}\sum_{M=1}^{N-1} M^{-1-a+\overline{\dimM}(A)+\eta/2}M^{-\overline{\gamma}-2\eta/3}.\]
Since $-1-\eta/6-(\overline{\gamma}-(\overline{\dimM}(A)-a))<-1$, it follows that 
\begin{equation}
    \lim_{N\to\infty}S_2(\gamma,N)=0\label{Eq_S2}
\end{equation}
for every $\gamma>\overline{\gamma}.$
Putting together (\ref{Eq_S1}) and (\ref{Eq_S2}), we conclude that for every $\gamma>\overline{\gamma}$,
\begin{equation}
    \lim_{N\to \infty}\dfrac{1}{N^{\gamma}}\sum_{n=1}^Nn^{-a}\mathbbm{1}_{A}(n)\leq\limsup_{N\to \infty} S_1(\gamma,N)+S_2(\gamma,N)=0.\label{Eq_case_2}
\end{equation}
From (\ref{Eq_case_1}) and (\ref{Eq_case_2}), we obtain (\ref{thmA_prop_eq}). If $\dimM(A)$ is well-defined, we can immediately replace the $\limsup$ by $\lim$ in (\ref{Eq_S1}), and then, we can do it in (\ref{Eq_case_1}). This leads to the conclusion.
\end{proof}

We are now in a position to prove Theorem \ref{thmA}.
\begin{proof}[Proof of \cref{thmA}]   Notice that the first part, (\ref{thmA_first}), follows from Proposition \ref{Proposition_LLN}.

Let $A\subseteq \N$ such that $$\overline{\gamma}:=\max\{0,\overline{\dimM}(A)-a\}\geq \delta.$$ Using the notation of (\ref{Random_variables}), for $N\in \N$ and $\gamma>0$, we define
    $$V_{\gamma,N}(y):=\dfrac{1}{N^\gamma}\sum_{n=1}^N\mathbbm{1}_{A}(n)\cdot Y_n(y).$$  Hence, using that $\sigma_n\ll n^{-a},$
    \begin{equation}
 \mathbb{E}(V_{\gamma,N}^2)\ll \dfrac{1}{N^{2\gamma}}\left(\sum_{n=1}^Nn^{-a}\mathbbm{1}_{A}(n)+\sum_{n=1 }^N\sum_{m=n+1}^N \mathbbm{1}_{A}(n)\mathbbm{1}_{A}(m)\mathbb{E}(Y_nY_m)\right). \label{TheoremA_eq1}
\end{equation}

If $\gamma>\overline{\gamma}/2$, Proposition \ref{thmA_prop} leads to 
 
\begin{equation}\dfrac{1}{N^{2\gamma}}\sum_{n=1}^N n^{-a}\mathbbm{1}_A(n)=\dfrac{1}{N^{\gamma-\overline{\gamma}/2}}\left (\dfrac{1}{N^{\overline{\gamma}+(\gamma-\overline{\gamma}/2)}}\sum_{n=1}^N n^{-a}\mathbbm{1}_A(n)\right)\ll N^{-(\gamma-\overline{\gamma}/2)}.\label{TheoremA_eq2}
\end{equation}
On the other hand, using  \cref{Lemma_sublacunary} and \cref{Lemma_estimation_1}, we have that

 \begin{equation*}
        \sum_{n=1 }^N\sum_{m=n+1}^N \mathbbm{1}_{A}(n)\mathbbm{1}_{A}(m)\mathbb{E}(Y_nY_{m})\ll\sum_{n=1 }^N\sum_{d=1}^N \mathbbm{1}_{A}(n)\mathbbm{1}_{A}(n+d) (n+d)^{-a}\Big(u_{n}^{-1}+\beta^{-\frac{d}{(2N)^\delta}}\Big),
\end{equation*}
for some $\beta>1$. Noticing that 
$$\sum_{n=1 }^N\sum_{d=1}^N \mathbbm{1}_{A}(n)\mathbbm{1}_{A}(n+d) (n+d)^{-a}u_{n}^{-1}\ll \sum_{k=1}^{2N}k^{-a}\mathbbm{1}_{A}(k)$$
and
$$\sum_{n=1 }^N\sum_{d=1}^N \mathbbm{1}_{A}(n)\mathbbm{1}_{A}(n+d) (n+d)^{-a}\beta^{-\frac{d}{(2N)^\delta}}\ll N^\delta \sum_{n=1}^Nn^{-a}\mathbbm{1}_A(n),$$
we use Proposition \ref{thmA_prop} to obtain that for every $\varepsilon>0$,
\begin{equation}
   \dfrac{1}{N^{2\gamma}}\sum_{n=1 }^N\sum_{m=n+1}^N \mathbbm{1}_{A}(n)\mathbbm{1}_{A}(m)\mathbb{E}(Y_nY_{m})\ll \dfrac{N^\delta}{N^{2\gamma}}\sum_{n=1}^Nn^{-a}\mathbbm{1}_A(n)\ll N^{\delta+(\overline{\gamma}+\varepsilon)-2\gamma}.\label{TheoremA_eq3} 
\end{equation}
 Combining (\ref{TheoremA_eq1}), (\ref{TheoremA_eq2}) and (\ref{TheoremA_eq3}), it follows that for every $\gamma>\overline{\gamma}/2$ and $\varepsilon>0$,
\begin{equation}
    \mathbb{E}(V_{\gamma,N}^2)\ll  N^{-(\gamma-\overline{\gamma}/2)} +N^{\delta+(\overline{\gamma}+\varepsilon)-2\gamma}.\label{Equation_thmA_4}
\end{equation}
 We conclude that, for every $A\subseteq \N$, $\gamma>(\delta+\overline{\gamma})/2$ and $\eta>1$,
 $$\sum_{N=1}^\infty\mathbb{E}( V_{\gamma,\lfloor\eta^N\rfloor}^2)<\infty.$$
 By using the Markov Inequality and Borel-Cantelli,
\begin{equation}
\lim_{N\to\infty} V_{\gamma,\lfloor\eta^N\rfloor}(y)=0 \text{ for almost every $y\in [0,1]$}.
\end{equation}

Putting together this fact with Proposition \ref{thmA_prop}, we get that for every $\gamma>\overline{\gamma}\geq (\delta+\overline{\gamma})/2$ and $\eta>1$,
$$\lim_{N\to \infty}\dfrac{1}{\lfloor\eta^N\rfloor^{\gamma}}\sum_{n=1}^{\lfloor\eta^N\rfloor}X_n(y)\mathbbm{1}_{A}(n)=0\text{ for almost every $y\in [0,1]$}.$$
 Applying \cref{Lemma_lacunary_trick}, we conclude that for any $\gamma>\overline{\gamma}$, there is $\Omega_{\gamma,A}\subseteq [0,1]$ of full measure such that for all $y\in \Omega_{\gamma,A}$,
\begin{equation}
    \lim_{N\to \infty}\dfrac{1}{N^{\gamma}}\sum_{n=1}^{N}X_n(y)\mathbbm{1}_{A}(n)=\lim_{N\to\infty}\dfrac{|A\cap \Lambda_y\cap [1,N]|}{N^\gamma}=0.\label{thmA_key1}
\end{equation}

For the case $\frac{\delta+\overline{\gamma}}{2}<\gamma<\overline{\gamma}$, Proposition \ref{thmA_prop} allows us to find a subsequence $(k_n)_{n\in\N}$ such that 
$$ \lim_{N\to \infty}\dfrac{1}{(k_N)^{\gamma}}\sum_{n=1}^{k_N}\sigma_n\mathbbm{1}_{A}(n)=\infty.$$
Without loss of generality, we can suppose that $k_n\geq 2^n$. From (\ref{Equation_thmA_4}), $$\displaystyle\sum_{n=1}^\infty\mathbb{E}(V_{\gamma,k_n}^2)<+\infty,$$ thus
 $\displaystyle \lim_{N\to \infty} V_{\gamma,k_N}(y)=0$ almost everywhere. Therefore, 
 \[\lim_{N\to \infty}\dfrac{1}{(k_N)^\gamma}\sum_{n=1}^{k_N} \mathbbm{1}_A(n)X_n(y)=\infty\quad \text{for almost every $y\in[0,1]$}.\] In particular, for every $\frac{\delta+\overline{\gamma}}{2}<\gamma<\overline{\gamma}$, there is a set of full measure $\Omega_{\gamma,A}\subset[0,1]$ such that for all $y\in \Omega_{\gamma,A}$,

\begin{equation}
  \limsup_{N\to \infty}\dfrac{1}{N^\gamma}\sum_{n=1}^N \mathbbm{1}_A(n)X_n(y)=\infty.\label{thmA_key_2}
 \end{equation}
 By taking $$\Omega_A := 
 \bigcap_{\gamma\in (\frac{\delta+\overline{\gamma}}{2},1)\cap \mathbb{Q}}\Omega_{\gamma,A},$$ we conclude that for every $y\in \Omega_A$, \begin{equation*}\limsup_{N\to\infty}\dfrac{1}{N^\gamma}\sum_{n=1}^N \mathbbm{1}_A(n)X_n(y)=\limsup_{N\to \infty}\dfrac{1}{N^\gamma}|A\cap \Lambda_y\cap[1,N]|=\begin{cases}
 +\infty &\text{ if } \gamma \in (\frac{\delta+\overline{\gamma}}{2},\overline{\gamma})\\ 
 0 &\text{ if } \gamma\in (\overline{\gamma},1)
\end{cases}\end{equation*}
When $a+\delta<\overline{\dimM}(A)$, it holds $\overline{\gamma}=\overline{\dimM}(A)-a$ and $\overline{\gamma}>(\delta+\overline{\gamma})/2$. Then, $$\overline{\dimM}(A\cap \Lambda_y)=\overline{\dimM}(A)-a$$ for every $y\in \Omega_A$. Notice that if $\dimM(A)$ is well-defined, (\ref{thmA_key_2}) holds when replacing $\limsup$ by $\lim$, which allows us to conclude that $\dimM(A\cap \Lambda_y)$ is well-defined.

On the other hand, if $a>\overline{\dimM}(A)$ and $\delta=0$, we conclude that, for every $y\in \Omega_A$ and $\gamma>0=\overline{\gamma}$,$$\limsup_{N\to \infty}\dfrac{1}{N^\gamma}|A\cap \Lambda_y\cap[1,N]|=0,$$
obtaining that $\overline{\dimM}(A\cap \Lambda_y)=\dimM(A\cap\Lambda_y)=0.$

\end{proof}

\section{Proof of Theorem B}
Convergence of random ergodic averages in the lack of independence was firstly studied by Donoso, Maass, and the author in \cite{DMS}. While the random sequences under consideration in this work are of a different nature, the proof of \cref{thmB} is guided by the approach taken in \cite[Theorem~3.1]{DMS} (see also \cite[Theorem 1.1]{Frantzikinakis_Lesigne_Wierdl}). The essential distinction lies in the way in which dependence among the random variables is controlled, which in our setting relies on the estimates established in Lemma \ref{Lemma_estimates_2}.

The following lemma, which is implicitly contained in the proof of \cite[Theorem~3.1]{DMS}, gathers some steps we will reuse in our argument. For completeness, we outline the main points without reproducing the full details.

\begin{lemma} Let $(\Omega,\mathcal{F},\mathbb{P})$ be a probability space and $(X_n)_{n\in\N}$ be a sequence of random variables to values in $\{0,1\}$ such that $\mathbb{P}(X_n=1)\sim n^{-a}$ for some $a\in (0,1/2)$, and it holds
\begin{equation}
    \lim_{N\to\infty}\dfrac{1}{N^{1-a}}\sum_{n=1}^N\mathbb{E}(X_n)=\lim_{N\to \infty}\dfrac{1}{N^{1-a}}\sum_{n=1}^N X_n(\omega)=1 \text{ almost everywhere}.\label{Lemma_previous_paper_H1}
    \end{equation}

For $Y_n:=X_n-\mathbb{E}(X_n)$, if there exist $b\in (2a,1)$ and $\varepsilon>0$  such that
\begin{equation}
    \begin{split}
    \mathbb{E}\Bigg(\sum_{m=1}^{\lfloor N^{b}\rfloor}\Big|\sum_{n=1}^{N-m}Y_{n+m}Y_n\Big|\Bigg)\ll N^{1-2a+b-\varepsilon},\label{Lemma_previous_paper_H2}
    \end{split}
\end{equation}
then the random sequence $$\Big\{n\in\N:\ X_n(\omega)=1\Big\}=\Big\{a_1(\omega)<a_2(\omega)<\cdots\Big\}$$ is pointwise universally $L^2$-good and ergodic for almost every $\omega\in \Omega.$
    \label{Lemma_previous_paper}
\end{lemma}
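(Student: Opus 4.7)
The plan is to adapt the blueprint of \cite[Theorem~3.1]{DMS}, substituting hypothesis (\ref{Lemma_previous_paper_H2}) wherever a variance estimate is needed. To begin, I would convert the ergodic average into a weighted Birkhoff sum
\[
A_M^\omega f(x):=\frac{1}{M^{1-a}}\sum_{n=1}^M X_n(\omega)f(T^nx).
\]
Hypothesis (\ref{Lemma_previous_paper_H1}) yields $a_N(\omega)\sim N^{1/(1-a)}$ almost surely, so $\frac{1}{N}\sum_{n=1}^N f(T^{a_n(\omega)}x)=(1+o(1))\,A_{a_N(\omega)}^\omega f(x)$, reducing the problem to proving $A_M^\omega f\to\mathbb{E}(f\mid\mathcal{I}(T))$ almost everywhere. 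Then I would split $A_N^\omega f=B_Nf+R_N^\omega f$ with
\[
B_Nf(x):=\frac{1}{N^{1-a}}\sum_{n=1}^N\sigma_n f(T^nx),\qquad R_N^\omega f(x):=\frac{1}{N^{1-a}}\sum_{n=1}^N Y_n(\omega)f(T^nx).
\]
A summation-by-parts argument combined with Birkhoff's pointwise theorem shows that $B_Nf(x)\to\mathbb{E}(f\mid\mathcal{I}(T))(x)$ for $\mu$-a.e.\ $x$, using $\sigma_n\sim n^{-a}$ and $\sum_{n\le N}\sigma_n\sim N^{1-a}$.

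The core step is to show $R_N^\omega f\to 0$ almost everywhere. Applying van der Corput's inequality in $L^2(\mu)$ to the vectors $Y_n(\omega)\cdot f\circ T^n$ with parameter $M=\lfloor N^b\rfloor$, together with the measure-preservation identity $\langle f\circ T^n,f\circ T^{n+m}\rangle=\langle f,f\circ T^m\rangle$ and $|\langle f,f\circ T^m\rangle|\le\|f\|^2$, I would obtain
\[
\|R_N^\omega f\|_{L^2(\mu)}^2 \ll \frac{\|f\|_{L^2}^2}{N^{1-2a+b}}\Bigg(\sum_{n=1}^N Y_n(\omega)^2+\sum_{m=1}^{\lfloor N^b\rfloor}\Big|\sum_{n=1}^{N-m}Y_{n+m}(\omega)Y_n(\omega)\Big|\Bigg).
\]
Taking the expectation in $\omega$, the first sum contributes $\ll N^{1-a}\|f\|^2$ since $\mathbb{E}(Y_n^2)\le\sigma_n\sim n^{-a}$, while hypothesis (\ref{Lemma_previous_paper_H2}) controls the second sum by $\ll N^{1-2a+b-\varepsilon}\|f\|^2$. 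After cancellation this gives $\mathbb{E}\|R_N^\omega f\|_{L^2}^2\ll (N^{a-b}+N^{-\varepsilon})\|f\|_{L^2}^2\ll N^{-\eta}\|f\|^2$ for some $\eta>0$, where the assumption $b>2a$ is essential. Summability of these bounds along any lacunary sequence $N_j=\lfloor\gamma^j\rfloor$, together with Tonelli and Markov, then yields $R_{N_j}^\omega f(x)\to 0$ for $(\omega,x)$-a.e.

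To extend pointwise convergence from lacunary $N_j$ to every $N$, I would apply \cref{Lemma_lacunary_trick} to the non-negative averages $A_N^\omega f^\pm$ obtained by splitting $f=f^+-f^-$, iterating the previous step along a sequence $\gamma_k\to 1^+$. The hardest part will be upgrading this to universality: producing a single null set in $\omega$ that works simultaneously for every m.p.s.\ $(X,\mathcal{X},\mu,T)$ and every $f\in L^2(\mu)$. Following \cite{DMS}, the strategy is to combine the $L^2$-bound above with a maximal inequality derived from the same van der Corput estimate and a Calderón-style transference, and then invoke a density argument on a countable dense family of test functions. Hypothesis (\ref{Lemma_previous_paper_H2}) is precisely tailored for this: it controls the pathwise autocorrelation sums rather than merely their expectations, which is exactly the input needed for uniform maximal control in $\omega$.
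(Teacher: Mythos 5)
Your van der Corput reduction, the decomposition into $B_Nf + R_N^\omega f$, and the final appeal to \cref{Lemma_lacunary_trick} all match the paper's argument. The issue is in how you handle universality. You have already written the crucial estimate
\[
\|R_N^\omega f\|_{L^2(\mu)}^2 \ll \frac{\|f\|_{L^2}^2}{N^{1-2a+b}}\Bigg(\sum_{n=1}^N Y_n(\omega)^2+\sum_{m=1}^{\lfloor N^b\rfloor}\Big|\sum_{n=1}^{N-m}Y_{n+m}(\omega)Y_n(\omega)\Big|\Bigg),
\]
whose right-hand side, after normalizing $\|f\|_{L^2}\le 1$, is a function of $\omega$ alone and carries no reference to $(X,\mathcal{X},\mu,T)$ or $f$. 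This is the whole point of the paper's proof, made explicit there as a remark, and it makes your ``hardest part'' disappear. Concretely: hypothesis \eqref{Lemma_previous_paper_H1} gives $\sum_{n\le N}Y_n^2(\omega)\ll N^{1-a}$ almost surely (pathwise, not just in expectation, since $Y_n^2\le X_n+\sigma_n^2$), so the bound becomes $\|R_N^\omega f\|_{L^2(\mu)}^2\ll N^{a-b}+V_N'(\omega)$ where $V_N'$ depends only on $\omega$. Hypothesis \eqref{Lemma_previous_paper_H2} plus Markov and Borel--Cantelli then yields a single full-measure set $\Omega'$, independent of any dynamical system, on which $\sum_j V'_{\lfloor\gamma^j\rfloor}(\omega)<\infty$ for all lacunary scales; for each $\omega\in\Omega'$ and each system one then applies Borel--Cantelli in $x$ and the lacunary trick. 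No maximal inequality, no Calder\'on transference, and no density argument are needed.

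Instead, you prematurely take the expectation in $\omega$ and invoke Tonelli, which produces a null set in $\omega$ depending on the particular $(X,\mathcal{X},\mu,T,f)$; you correctly flag this as a problem but then propose a repair (a maximal inequality from the same van der Corput estimate, plus Calder\'on transference and density) that is not carried out, is substantially heavier than what the hypotheses call for, and is not what the cited source does. In fact a maximal inequality for $\sup_N|R_N^\omega f|$ does not follow formally from the fixed-$N$ $L^2$ bound you have, so this step would require genuinely new work. Your closing claim that \eqref{Lemma_previous_paper_H2} ``controls the pathwise autocorrelation sums rather than merely their expectations'' is also inaccurate as stated: \eqref{Lemma_previous_paper_H2} is exactly an expectation bound, and the pathwise control only emerges after Markov and Borel--Cantelli along lacunary scales. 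So: the analytic core of your argument is right, but the mechanism that delivers a system-independent null set in $\omega$ is missing, and the replacement you sketch would not go through as written.
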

\begin{proof} Let $(X,\mathcal{X},\mu,T)$ be a measure-preserving system and $f\in L^2(\mu)$.  
Notice that, to show 
\[\lim_{N\to\infty}\dfrac{1}{N}\sum_{n=1}^N f(T^{a_n(\omega)}x)=\mathbb{E}(f|\mathcal{I}(T))(x),\]
it is enough to show that 
\[\lim_{N\to\infty}\dfrac{1}{\sum_{n=1}^NX_n(\omega)}\sum_{n=1}^N X_n(\omega)f(T^{n}x)=\lim_{N\to\infty}\dfrac{1}{N^{1-a}}\sum_{n=1}^N X_n(\omega)f(T^{n}x)=\mathbb{E}(f|\mathcal{I}(T))(x).\]
On the other hand, \cite[Lemma A.3]{Frantzikinakis_Lesigne_Wierdl} and the ergodic theorem allows us to conclude that for almost every $x\in X$,
  $$\lim_{N\to\infty}\dfrac{1}{N^{1-a}}\sum_{n=1}^N\mathbb{E}(X_n)f(T^nx)=\lim_{N\to \infty}\dfrac{1}{N}\sum_{n=1}^Nf(T^nx)=\mathbb{E}(f|\mathcal{I}(T))(x).$$

  Hence, it is enough to show that for almost every $\omega\in \Omega$, for every measure-preserving system $(X,\mathcal{X},\mu,T)$ and every $f\in L^2(\mu)$, it holds that
   \begin{equation}
     \lim_{N\to\infty}\dfrac{1}{N^{1-a}}\sum_{n=1}^N(X_n(\omega)-\mathbb{E}(X_n))f(T^nx)=0\quad\text{
   for almost every $x\in X$ }\label{Lemma_previous_paper_eq1}
   \end{equation}

For $N\in \N$ and $\omega\in \Omega$ , we define
 $$V_{N}(\omega):=\left\|N^{-1+a}\sum_{n=1}^N Y_n(y)\cdot T^{n}f \right\|^2_{L^2(\mu)}.$$
 
    By using the Van der Corput lemma (see \cite[Lemma 2.3]{DMS}) with $M=\lfloor N^b\rfloor$, it follows that
        \begin{equation*}
          \begin{split}
          V_N(\omega)&\ll N^{-1+2a} M^{-1}\sum_{n=1}^{N}\| Y_n(\omega)\cdot T^{n}f\|_{L^2(\mu)}^2+V_{1,N}(\omega),
    \end{split}
    \end{equation*}
where \[V_{1,N}(\omega):=N^{-1+2a}M^{-1}\sum_{m=1}^{M}\left|\sum_{n=1}^{N-m}\int_{X} Y_{n+m}(\omega)\cdot Y_n(\omega)\cdot T^{n+m}f\cdot T^n f\text{d}\mu\right|.\]
Recalling that $f\in L^2(\mu)$, we can estimate $V_{1,N}$ by composing with $T^{-n}$ and applying the Cauchy--Schwarz inequality, obtaining that
\begin{equation*}
    V_N(\omega)\ll N^{-1+2a-b}\sum_{n=1}^NY_n^2(\omega)+V'_N(\omega),
\end{equation*}
where $$V'_N(\omega):=N^{-1+2a-b}\sum_{m=1}^M\Big|\sum_{n=1}^{N-m}Y_{n+m}(\omega)Y_n(\omega)\Big|.$$

It is worth emphasizing here that the right-hand side of the inequality no longer depends on the dynamical system under consideration. It is also possible to see from (\ref{Lemma_previous_paper_H1}) that $\sum_{n=1}^NY_n^2(\omega)\ll N^{1-a}$ almost surely. Then,
\begin{equation}
      V_N(\omega)\ll N^{a-b}+V'_N(\omega).\label{Lemma_previous_paper_eq2}
\end{equation}

Using the hypothesis (\ref{Lemma_previous_paper_H2}), we can choose $b\in (2a,1)$ and $\varepsilon>0$ such that 
$$N^{-1+2a-b}\mathbb{E}\Bigg(\sum_{m=1}^{\lfloor N^{b}\rfloor}\Big|\sum_{n=1}^{N-m}Y_{n+m}Y_n\Big|\Bigg)\ll N^{-\varepsilon}.$$
In particular, for any $\eta>1$, 
$$\sum_{N=1}^\infty\mathbb{E}(V'_{\lfloor\eta^N\rfloor})<\infty.$$
A classical application of Markov Inequality and Borel-Cantelli allows us to conclude that 
$$\sum_{N=1}^\infty V'_{\lfloor\eta^N\rfloor}(\omega)<\infty\quad \text{for almost every $\omega\in\Omega$}.$$

Therefore, from (\ref{Lemma_previous_paper_eq2}), there is a set of full measure $\Omega'$ (independent of the dynamical system) such that 
$$\sum_{N=1}^\infty V_{\lfloor(1+1/k)^N\rfloor}(\omega)<\infty$$
for every $\omega\in \Omega'$ and every $k\in\N$. It follows that for every $\omega\in \Omega'$, 
$$\lim_{N\to\infty}\dfrac{1}{\lfloor(1+1/k)^N\rfloor^{1-a}}\sum_{n=1}^{\lfloor(1+1/k)^N\rfloor}(X_n(\omega)-\mathbb{E}(X_n))f(T^nx)=0$$
for almost every $x\in X$ and every every $k\in\N$. We complete the proof by using \cref{Lemma_lacunary_trick}.
\end{proof}

We are now in position to proof the main theorem.
\begin{proof}[Proof of \cref{thmB}]
By considering the probability space $([0,1],\mathcal{B}([0,1]),\lambda)$ and $$X_n(y):=\mathbbm{1}_{I_n}(\{u_ny\}),$$
all that remains is to check the hypotheses to use \cref{Lemma_previous_paper}. 

Note that the condition (\ref{Lemma_previous_paper_H1}) follows by \cref{Lemma_estimation_1} and Proposition \ref{Proposition_LLN}. For $b\in (2a,1)$, $M=\lfloor N^{b}\rfloor$ and $y\in [0,1]$, let
$$V_N'(y):=N^{-1+2a-b}\sum_{m=1}^{M}\Big|\sum_{n=1}^{N-m}Y_{n+m}(y)Y_n(y)\Big|.$$
Using Jensen's inequality, it is possible to see that
\begin{equation}
    \begin{split}
        \mathbb{E}(V_N')^2\ll& N^{4a-b-2}\sum_{m=1}^M \sum_{n=1}^{N-m}  \E (Y_n^2Y_{n+m}^2)\\
        &+N^{4a-b-2}\sum_{m=1}^M \sum_{n_2=n_1+1}^{N-m} \mathbb{E}(Y_{n_1}Y_{n_1+m}Y_{n_2}Y_{n_2+m}).\label{ProofA_eq1} \end{split}
\end{equation}
   Since $X_n\in \{0,1\}$, it is not difficult to see that $Y_n^2Y_{n+m}^2\ll X_nX_{n+m}+\sigma_n^2.$
Then, using Lemma \ref{Lemma_sublacunary} and Lemma \ref{Lemma_estimation_1}, there is $\beta>1$ such that
\begin{equation}
    \begin{split}
        &N^{4a-b-2}\sum_{m=1}^M \sum_{n=1}^{N-m}  \E (Y_n^2Y_{n+m}^2)\\
        &\ll N^{4a-b-2} \sum_{m=1}^M\sum_{n=1}^{N-m} \Big(|\Cov(X_n,X_{n+m})|+2n^{-2a}\Big)\\
        &\ll N^{2a-b}+ N^{4a-b-2}\Big(\sum_{m=1}^M \sum_{n=1}^{N}(n+m)^{-a}u_{n}^{-1} + \sum_{m=1}^M\sum_{n=1}^N (n+m)^{-a}u_{n}u_{n+m}^{-1}\Big)\\
        &\ll N^{2a-b}+N^{-1+3a-b}+ N^{4a-b-2}\sum_{n=1}^Nn^{-a}\sum_{m=1}^N \beta^{-d/(n+m)^\delta}\\
        &\ll N^{2a-b}+N^{-1+3a-b}+ N^{4a-b-2}\sum_{n=1}^Nn^{-a}\sum_{m=1}^N \beta^{-d/(2N)^\delta}\\
        &\ll N^{2a-b}+N^{-1+3a-b+\delta},\label{ProofA_eq2}
    \end{split}
\end{equation}
where we used the fact that $(u_n^{-1})_n$ is summable and $$\sum_{n=1}^N\beta^{-n/(2N)^\delta}\leq \dfrac{1}{1-\beta^{-1/(2N)^\delta}}\ll N^\delta.$$
To estimate $\displaystyle \sum_{m=1}^M \sum_{n_2=n_1+1}^{N-m} \mathbb{E}(Y_{n_1}Y_{n_1+m}Y_{n_2}Y_{n_2+m})$, we 
use Lemma \ref{Lemma_estimates_2}. For $k_1<k_2<k_3<k_4$, we have that 
\begin{align}
    \left|\mathbb{E}(Y_{k_1}Y_{k_2}Y_{k_3}Y_{k_4})\right|
\ll(k_3k_2)^{-a}\Big(
\frac{1}{u_{k_1}}
+ \frac{u_{k_1}}{u_{k_2}}
+ \frac{u_{k_2}}{u_{k_3}}
\Big)+k_3^{-a}\frac{u_{k_3}}{u_{k_4}}\Big(k_1^{-a}+\frac{1}{u_{k_1}}+\frac{u_{k_2}}{u_{k_3}}\Big).\label{Extra_final}
\end{align}
For the case $n_1\leq n_1+m\leq n_2\leq n_2+m$, we have that
\begin{equation}
    \begin{split}
        &\sum_{m=1}^M\sum_{n_1=1}^N\sum_{n_2=n_1+m+1}^Nn_2^{-a}(n_1+m)^{-a}\Big(
\frac{1}{u_{n_1}}
+ \frac{u_{n_1}}{u_{n_1+m}}
+ \frac{u_{n_1+m}}{u_{n_2}}
\Big)\\
&\ll N^{2-2a}+\sum_{n_1=1}^N n_1^{-2a}\sum_{m=1}^M\sum_{n_2=n_1+m+1}^N \beta^{-m/(2N)^\delta}+\beta^{-(n_2-n_1-m)/(2N)^\delta}\\
&\ll N^{2-2a}+N^{2-2a+\delta}.\label{ProofA_eq3}
    \end{split}\end{equation}

Similarly,
\begin{equation}
    \begin{split}
      &\sum_{m=1}^M\sum_{n_1=1}^N\sum_{n_2=n_1+m+1}^Nn_2^{-a}\frac{u_{n_2}}{u_{n_2+m}}\Big(n_1^{-a}+\frac{1}{u_{n_1}}+\frac{u_{n_1+m}}{u_{n_2}}\Big)\\
      &\ll N^{2-2a+\delta}+N^{1-a+\delta}+N^{1-a+2\delta}\ll N^{1-2a+2\delta}.\label{ProofA_eq4}
    \end{split}
\end{equation}
Regarding the case $n_1\leq n_2\leq n_1+m\leq n_2+m$, it is easy to see that

\begin{equation}
    \begin{split}
        \sum_{m=1}^N\sum_{n_1=1}^N\sum_{n_2=n_1+1}^{n_1+m}(n_1+m)^{-a}n_2^{-a}\Big(
\frac{1}{u_{n_1}}
+ \frac{u_{n_1}}{u_{n_2}}
+ \frac{u_{n_2}}{u_{n_1+m}}
\Big)\ll N^{2-2a+\delta}.\label{ProofA_eq5}
    \end{split}
\end{equation}
On the other hand, 
\begin{equation}
    \begin{split}
         &\sum_{m=1}^N\sum_{n_1=1}^N\sum_{n_2=n_1+1}^{n_1+m}(n_1+m)^{-a}\frac{u_{n_1+m}}{u_{n_2+m}}\Big(n_1^{-a}+\frac{1}{u_{n_1}}+\frac{u_{n_2}}{u_{n_1+m}}\Big)\\
         &\ll N^{2-2a+\delta}+N^{1-a+\delta}+\sum_{n_1=1}^Nn_1^{-a}\sum_{m=1}^M \sum_{n_2=n_1+1}^{n_1+m}\beta^{\frac{-(n_2-n_1)}{(2N)^\delta}}\beta^{\frac{-(n_1+m-n_2)}{(2N)^\delta}}\\
         &\ll N^{2-2a+\delta}+N^{1-a}\sum_{t=1}^N \beta^{-t/(2N)^\delta}\sum_{m=t}^N\beta^{-(m-t)/(2N)^\delta}\\
         &\ll N^{2-2a+\delta}+N^{1-a+2\delta}.\label{ProofA_eq6}
    \end{split}
\end{equation}

Finally, using the relations \eqref{ProofA_eq2}–\eqref{ProofA_eq6} in \eqref{ProofA_eq1}, it follows that
$$\mathbb{E}(V_N')^2\ll N^{2a-b}+N^{4a-b-2}\Big(N^{2-2a+\delta}+N^{1-a+2\delta}\Big)\ll N^{2a-b}+N^{2a-b+\delta}+N^{-1+3a-b+2\delta}.$$
Since $2a+\delta<1$, we can choose $b$ such that $2a<b<1$, $2a+\delta<b$ and $1<b+a.$ By taking $$\varepsilon:=\frac{1}{2}\min\{b-2a, b-2a-\delta,1+b+a-2(2a+\delta)\},$$
it follows that $$\mathbb{E}(V_N')\ll N^{-\varepsilon}.$$ The proof is completed by invoking \cref{Lemma_previous_paper}.
\end{proof}

\bibliographystyle{abbrv}
\bibliography{bibliography}

\end{document}